\DeclareSymbolFont{cyrletters}{OT2}{wncyr}{m}{n}
\DeclareMathSymbol{\Sha}{\mathalpha}{cyrletters}{"58}
\newtheorem{thm}{Theorem}[section]
\newtheorem{lemma}[thm]{Lemma}
\newtheorem{prop}[thm]{Proposition}
\newtheorem{cor}[thm]{Corollary}
\newcommand{\Selp}{\mathrm{Sel}_{p^\infty}}
\newcommand{\QQ}{\mathbb{Q}}
\newcommand{\QQc}{\mathbb{Q}(\mu_{p^\infty})}
\newcommand{\Kc}{K^{\mathrm{cyc}}}
\newcommand{\Kcc}{K_\infty}
\newcommand{\QQdiv}{\mathbb{Q}\left(E\left[p^\infty \right]\right)}
\newcommand{\QQd}{\mathbb{Q}\left(E\left[p\right]\right)}
\newcommand{\Fp}{\mathbb{F}_p}
\newcommand{\ZZ}{\mathbb{Z}}
\newcommand{\CC}{\mathbb{C}}
\newcommand{\Zp}{\mathbb{Z}_p}
\newcommand{\Gal}{\operatorname{Gal}}
\newcommand{\rk}{\mathrm{rk}}
\newcommand{\GL}{\mathrm{GL}}
\newtheorem*{remark}{Remark}
\newif\ifapx
\newif\ifkron
\newif\ifexample
\begin{document}

\title{Ranks of $GL_2$ Iwasawa modules of elliptic curves \ifapx \\  (with an appendix by Gergely Z\'abr\'adi \footnote{Assistant professor, E\"otv\"os Lor\'and University, Department of Algebra and Number Theory. E\mbox{-}mail:~{\tt~zger@cs.elte.hu}}~~) \fi }
\author{Tibor Backhausz \footnote{student, E\"otv\"os Lor\'and University. E\mbox{-}mail: {\tt taback@math.elte.hu}} }

\date{\today}
\maketitle

\begin{abstract}
Let $p \ge 5$ be a prime and $E$ an elliptic curve without complex multiplication and let $\Kcc=\QQdiv$ be a pro-$p$ Galois extension
over a number field $K$.
We consider $X(E/\Kcc)$, the Pontryagin dual of the $p$-Selmer group $\Selp(E/\Kcc)$.
The size of this module is roughly measured by its rank $\tau$ over a $p$-adic Galois group algebra $\Lambda(H)$, which has been studied in the past decade.
We prove $\tau \ge 2$ for almost every elliptic curve under standard assumptions.
We find that $\tau = 1$ and $j \notin \ZZ$ is impossible, while $\tau = 1$ and $j \in \ZZ$ can occur in at most $8$ explicitly known elliptic curves. 
The rarity of $\tau=1$ was expected from Iwasawa theory, but the proof is essentially elementary.

It follows from a result of Coates et al.\ that $\tau$ is odd if and only if $[\QQd \colon \QQ]/2$ is odd. 
We show that this is equivalent to $p=7$, $E$ having a $7$-isogeny, a simple condition on the discriminant and local conditions at $2$ and $3$.
Up to isogeny, these curves are parametrised by two rational variables using recent work of Greenberg, Rubin, Silverberg and Stoll.
\end{abstract}

\section{Introduction}

Let $E$ be an elliptic curve defined over $\mathbb{Q}$ with good ordinary reduction at the prime $p\geq 5$ and without complex multiplication. We denote by $X(E/K_\infty)$ the dual Selmer group of $E$ over its associated $p$-division extension $K_\infty:=\mathbb{Q}(E[p^\infty])$. The aim of this paper is to investigate the $\Lambda(H)$-rank of $X(E/K_\infty)$ under certain usual technical conditions that are conjectured to be always satisfied. Here $\Lambda(H)$ denotes the Iwasawa algebra of $H=\Gal(K_\infty/\Kc)$ where $K/\mathbb{Q}$ is a finite extension so that $\Gal(K_\infty/K)$ is pro-$p$, and $K^{cyc}$ is the cyclotomic $\mathbb{Z}_p$-extension of $K$. Our main result is that this $\Lambda(H)$-rank $\tau$ can never be $1$ except possibly for finitely many, explicitly known curves. It was previously proven using Iwasawa theoretic techniques that $\tau\neq 0$, and that $\tau=\lambda+s_{E/\Kc}$. Here $s_{E/\Kc}$ denotes the number of primes in $\Kc$ at which the curve $E$ has split multiplicative reduction and $\lambda$ is the usual $\lambda$-invariant of $E$ over $\Kc$, ie.\ the $\mathbb{Z}_p$-rank of the dual Selmer group $X(E/\Kc)$. We do not use further Iwasawa theory. Instead, the main ingredients are $(i)$ refinements of Serre's \cite{serre} study of division points on $E$; $(ii)$ Mazur's \cite{mazur} result on possible isogenies over $\mathbb{Q}$; and $(iii)$ elementary calculations on the moduli curve $X_0(7)$.  In fact, $\tau$ is very rarely odd as one could expect from the formula $\tau \equiv [K \colon \mathbb{Q}]/2 \equiv [\QQd \colon \QQ] \pmod{2}$ (this follows from \cite{cfks}, we give a simplified proof).

This decides the parity of $[\QQd \colon \QQ]/2$ for a given curve in a computationally easy way (Theorem \ref{mikorparos}), and combining this result with parametrisation from \cite{grss} gives all curves with odd $[\QQd \colon \QQ]/2$  (Theorem \ref{grss_param}). This determines all the curves with odd $\tau$.

Moreover, by the formula $\tau=\lambda+s_{E/K^{cyc}}$ there are two possibilities for $\tau=1$: either $\lambda=0$ and $s_{E/K^{cyc}}=1$, or $\lambda=1$ and $s_{E/K^{cyc}}=0$. We prove that the former never occurs --- all the possible exceptions are in the latter case.

Our results are in some sense negative, as Selmer groups with low $\Lambda(H)$-corank would be easier to test conjectures on. Moreover, using the results in \cite{Z} it can be shown that whenever the $j$-invariant of $E$ is non-integral (or, equivalently, if $s_{E/K^{cyc}}\neq 0$) then $X(E/K_\infty)$ is not annihilated by any central element in $\Lambda(G)$ where $G=\Gal(K_\infty/K)$. Combining this with results in \cite{ardakov} would give the first example of a completely faithful Selmer group over the $GL_2$-extension if $\tau=1$. However, as we show, this does not exist in nature even though it is expected that Selmer groups are \emph{all} completely faithful. The possible exceptions are still good candidates to test this and other conjectures. On the other hand, the $\Lambda(H)$-rank encodes important information on the growth of the $\lambda$-invariant inside $K_\infty$ and therefore interesting on its own (see Proposition \ref{lambdanov}).

\textbf{Acknowledgement.} The author would like to thank Gergely Z\'abr\'adi for his indispensable guidance that led to this work.

\section{Assumptions and definitions}
In this section we describe some of our assumptions for a field $K$, prime $p$ and elliptic curve $E$.
\emph{We assume that $E$ does not have complex multiplication.} For CM curves, the theory is different and better understood.

For $G$, a pro-$p$ group with $p$-adic Lie-group structure and no element of order $p$, we define its Iwasawa algebra as the 
inverse limit of $p$-adic group rings
$$\Lambda(G)=\varprojlim \Zp[G/H]$$
where $H$ varies over open normal subgroups of $G$.

For a $\Lambda(G)$-module $M$, the standard definition of rank is
$$\rk_{\Lambda(G)}(M)=\operatorname{dim}_{K(G)} K(G) \otimes_{\Lambda(G)} M$$
where $K(G)$ is the skew field of fractions of $\Lambda(G)$.
Let $\Kcc=\QQ(E[p^\infty])$, and $K$ be a Galois number field such that $\Kcc/K$ is pro-$p$.
Recall that by the Weil pairing, we have $\bigwedge^2 E[p^n] = \mu_{p^n}$,
the group of $p^n$-th roots of unity as a Galois module.
Therefore $K(E[p^n])$ contains $K(\mu_{p^n})$ so $\Kcc$ contains
$\Kc=K(\mu_{p^\infty})$.

Define $G=\Gal(\Kcc/K)$, $H=\Gal(\Kcc/\Kc)$ and $\Gamma=\Gal(\Kc/K)$.

Let $M(p)$ denote the $p$-primary torsion subgroup of a module $M$.
Let $\mathfrak{M}_H(G)$ denote the category of finitely generated $\Lambda(G)$ modules $M$ such that $M/M(p)$ is finitely generated over $\Lambda(H)$.
We make the following assumptions, which are traditional in non-commutative Iwasawa theory.

\begin{enumerate}[(I)]
\item $p \ge 5$
\item $E/K$ has good ordinary reduction at all places above $p$
\item $\Gal(\Kcc/K)$ is pro-$p$
\item $X(E/\Kcc) \in \mathfrak{M}_H(G)$
\end{enumerate}

It is always conjectured that Assumptions (I)-(II) imply Assumption (IV) \cite[Conjecture 5.1]{CFKSV}.
Equivalently, define $Y(E/\Kcc)=X(E/\Kcc)/X(E/\Kcc)(p)$, then $Y(E/\Kcc)$ should be finitely generated over $\Lambda(H)$. This assumption also implies that $X(E/\Kc)$ is torsion over $\Lambda(\Gamma)$ see \cite[Lemma 5.3]{CFKSV}.

In the usual case when $X(E/\Kc)$ is finitely generated over $\Zp$, $X(E/\Kcc)$ is torsion-free and finitely generated over $\Lambda(H)$, in particular $Y(E/\Kcc)=X(E/\Kcc)$.

\section{The $\tau$ rank}
A proposed analogue to $\lambda$ in the non-commutative case is $$\tau=\rk_{\Lambda(H)} Y(E/\Kcc)$$ (see, e.g.~\cite{cfks}, whose notation $\tau$ we follow). We state some earlier results on $\tau$, originally stated with stronger assumptions, and show they are applicable assuming (I)-(IV).

\begin{thm}[Howson]
\label{tauform}
Suppose that Assumptions (I)-(IV) hold. Then
$$\tau = \rk_{\Lambda(H)} Y(E/\Kcc)=\lambda+s_{E/\Kc}$$
\end{thm}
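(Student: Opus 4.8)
The plan is to derive the formula $\tau = \lambda + s_{E/\Kc}$ from the known Iwasawa-theoretic structure of the dual Selmer group over the cyclotomic $\Zp$-extension, transferred to the $GL_2$-setting via the fundamental diagram relating $X(E/\Kcc)$ and $X(E/\Kc)$. The key observation is that $\tau$, being the $\Lambda(H)$-rank of $Y(E/\Kcc)$, measures the generic corank of the Selmer group along $H$-directions, and this should be computable by specialising back to the cyclotomic line. Concretely, I would use the isomorphism $\Lambda(G)\cong\Lambda(H)\bs\Gamma\js$ (the twisted power series ring in the cyclotomic variable over $\Lambda(H)$) together with a control-type or descent argument identifying $\Lambda(H)\otimes_{\Lambda(G)} Y(E/\Kcc)$ with a module closely related to $X(E/\Kc)$ and the local contributions at places of split multiplicative reduction.

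First I would recall Howson's Euler-characteristic and rank computations: the $\Lambda(H)$-rank of $Y(E/\Kcc)$ can be extracted from the $\Lambda(\Gamma)$-characteristic ideal of $X(E/\Kc)$ via the exact sequence coming from taking $H$-(co)invariants. The $\lambda$-invariant $\lambda=\rk_{\Zp} X(E/\Kc)$ contributes the ``global'' part of the rank. The term $s_{E/\Kc}$ arises from the local Euler factors at the finite set of primes of $\Kc$ where $E$ has split multiplicative reduction: at such primes the local condition in the Selmer group contributes a trivial (unramified) $H$-module whose induction to the $GL_2$-level adds exactly one to the $\Lambda(H)$-rank per prime. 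I would make this precise using the Poitou--Tate sequence comparing the Selmer groups over $\Kcc$ and $\Kc$, whose kernels and cokernels are governed by local cohomology at the bad primes, and then compute $\Lambda(H)$-ranks of the resulting terms, noting that under Assumption (IV) everything is $\Lambda(H)$-finitely generated modulo $p$-torsion so ranks are additive in exact sequences.

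The main obstacle is verifying that the earlier results, originally proven under stronger hypotheses (for example, that $X(E/\Kc)$ is finitely generated over $\Zp$, or that $E[p]$ is irreducible as a $\Gal(\overline{\QQ}/\QQ)$-module), remain valid under the weaker Assumptions (I)--(IV). This requires checking that the relevant control theorems and the finiteness statement $X(E/\Kcc)\in\mathfrak{M}_H(G)$ suffice to run Howson's argument: specifically, that the local terms at places not dividing $p$ still have the expected $\Lambda(H)$-rank, and that the global-to-local map has kernel and cokernel of rank zero over $\Lambda(H)$. The delicate point is controlling the $p$-primary torsion submodule $X(E/\Kcc)(p)$, which is quotiented out in the definition of $Y$; I would argue that passing from $X$ to $Y$ does not affect the $\Lambda(H)$-rank (since $p$-torsion is $\Lambda(H)$-torsion), so the rank computation is insensitive to this subtlety.

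Finally, I would assemble these pieces: the additivity of $\Lambda(H)$-rank over the Poitou--Tate exact sequence yields $\tau$ as the sum of the global contribution $\lambda$ and the local contribution $\sum_{v} 1 = s_{E/\Kc}$ summed over split multiplicative primes $v$ of $\Kc$. The identification of the global contribution with $\lambda=\rk_{\Zp}X(E/\Kc)$ follows because $\Lambda(H)\otimes$ applied to a module that is finitely generated over $\Zp$ (with trivial $H$-action in the generic fibre) produces a free $\Lambda(H)$-module of the corresponding rank. I expect the verification of applicability under the weakened assumptions to be the genuinely substantive step, whereas the rank bookkeeping itself is formal once the fundamental diagram is in place.
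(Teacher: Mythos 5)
Your proposal does not follow the paper's route: the paper's proof is a short citation check (it verifies that Assumption (IV) is stronger than Howson's Conjecture 2.6, which implies his Conjecture 2.5, so that Howson's Theorem 2.8 applies and gives $\lambda+s_{E/\Kc}$ as the \emph{homological} rank of $X(E/\Kcc)$, and then uses Howson's equation (47) to identify that homological rank with $\tau$), whereas you set out to re-derive Howson's theorem itself. That is legitimate in principle, but your sketch has a genuine gap exactly where the cited results do the work. The mechanism you propose for extracting $\tau$ from cyclotomic data is unsound: $H$-coinvariants do not compute $\Lambda(H)$-rank, and $Y(E/\Kcc)$ is not an induced module of the form $\Lambda(H)\otimes_{\Zp}X(E/\Kc)$, so your closing argument (``$\Lambda(H)\otimes$ applied to a module finitely generated over $\Zp$ produces a free $\Lambda(H)$-module of the corresponding rank'') does not apply to anything in sight. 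A basic counterexample: $\Zp$ with trivial $H$-action is a torsion $\Lambda(H)$-module (rank $0$), yet its $H$-coinvariants have $\Zp$-rank $1$. The correct bridge is the Euler-characteristic formula $\rk_{\Lambda(H)}M=\sum_{i\ge 0}(-1)^i\,\rk_{\Zp}H_i(H,M)$ --- this is precisely the content of Howson's equation (47) that the paper invokes --- and using it forces you to control \emph{all} the homology groups $H_i(H,X(E/\Kcc))$ for $i\ge 1$, not just the degree-zero descent term; your sketch never mentions higher homology, and bounding it (via the computation of $H^i(H,E[p^\infty])$, surjectivity of the global-to-local maps, and the Tate parametrisation at split multiplicative primes) is the substance of Howson's proof. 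Two smaller inaccuracies point the same way: the expression $\Lambda(H)\otimes_{\Lambda(G)}Y(E/\Kcc)$ is not well defined, since there is no ring homomorphism $\Lambda(G)\to\Lambda(H)$ (what exists is $\Lambda(G)\to\Lambda(\Gamma)$, i.e.\ $H$-coinvariants); and at a split multiplicative prime the relevant local module is not ``trivial (unramified)'' --- the $+1$ per prime of $\Kc$ comes from the Tate curve structure and the two-dimensional local decomposition groups inside $H$.

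Moreover, the step you explicitly defer as ``the genuinely substantive step'' --- checking that Howson's results apply under Assumptions (I)--(IV) --- is in fact the entirety of the paper's proof, and it is short: (IV) says $Y(E/\Kcc)$ is finitely generated over $\Lambda(H)$, which is stronger than Howson's Conjecture 2.6; that conjecture implies his Conjecture 2.5; hence Howson's Theorem 2.8 and equation (47) apply verbatim. So the proposal inverts the actual difficulty: it postpones the easy but essential hypothesis verification, while sketching --- incompletely, and with a flawed rank-from-coinvariants mechanism --- the hard theorem that never needed to be reproved.
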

\begin{proof}
As stated above, (IV) is stronger than \cite[Conjecture 2.6]{howson} which implies \cite[Conjecture 2.5]{howson} therefore \cite[Theorem 2.8]{howson} is applicable.
This states that $\lambda+s_{E/\Kc}$ is the homological rank of $X(E/\Kcc)$. This equals $\tau$ using \cite[eqn.~47]{howson}.
\end{proof}

Let $\rk_{p}^{\mathrm Sel} E/F=\rk_{\Zp} X(E/F)$ be the $p$-Selmer rank of $E/F$.

\begin{cor}
\label{taubound}
$\tau \ge {\rk_{p}^{\mathrm Sel} E/K} + s_{E/K}$
\end{cor}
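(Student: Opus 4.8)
The plan is to read off both summands in Howson's formula $\tau = \lambda + s_{E/\Kc}$ (Theorem~\ref{tauform}) and bound each one below by its analogue over the base field $K$. Concretely, I would prove the two inequalities
\[
\lambda = \rk_{\Zp} X(E/\Kc) \ge \rk_{\Zp} X(E/K) = \rk_{p}^{\mathrm{Sel}} E/K,
\qquad
s_{E/\Kc} \ge s_{E/K},
\]
and then simply add them and invoke Theorem~\ref{tauform}.

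For the first inequality I would compare the Selmer groups of $E$ over $K$ and over the cyclotomic $\Zp$-extension $\Kc/K$ via the restriction map $\Selp(E/K) \to \Selp(E/\Kc)^{\Gamma}$. By inflation--restriction its kernel is contained in $H^1(\Gamma, E(\Kc)[p^\infty])$, which is finite since $\Gamma \cong \Zp$ and the coefficient module is cofinitely generated; this is the easy half of Mazur's control theorem and already suffices here. Hence the image of the restriction map has the same $\Zp$-corank as $\Selp(E/K)$, namely $\rk_{\Zp} X(E/K)$, and being a subgroup of $\Selp(E/\Kc)^{\Gamma}$ it has corank at most that of the latter. Dualising, $\mathrm{corank}\,\Selp(E/\Kc)^{\Gamma} = \rk_{\Zp} X(E/\Kc)_{\Gamma}$, so it remains to note that passing to $\Gamma$-coinvariants cannot raise the $\Zp$-rank: after tensoring with $\Qp$, the space $(X(E/\Kc)\otimes\Qp)_{\Gamma}$ is the cokernel of $\gamma-1$ acting on $X(E/\Kc)\otimes\Qp$, whose dimension equals that of the kernel $(X(E/\Kc)\otimes\Qp)^{\Gamma}$ and is therefore at most $\dim_{\Qp} X(E/\Kc)\otimes\Qp = \lambda$. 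Chaining these gives $\rk_{\Zp} X(E/K) \le \lambda$.

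For the second inequality I would use that split multiplicative reduction persists under base change. If $E/K$ has split multiplicative reduction at a place $v$, then at every place $w$ of $\Kc$ above $v$ the curve remains semistable and the tangent directions at the node stay rational over the residue field, so the reduction there is again split multiplicative. Since distinct places of $K$ lie under disjoint sets of places of $\Kc$ and each has at least one place above it, the number of split multiplicative places of $\Kc$ is at least the number of such places of $K$; both counts are finite because $E$ is multiplicative at only finitely many places and the number of places above a fixed $v$ stabilises in the $\Zp$-extension. This yields $s_{E/\Kc} \ge s_{E/K}$, and adding the two inequalities completes the argument. I expect the first inequality to be the main obstacle: one must confirm that the control-theorem input is available under Assumptions (I)--(IV) and that the elementary rank comparison for $\Gamma$-coinvariants is applied to the correct (finitely generated modulo $p$-torsion) module.
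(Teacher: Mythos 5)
Your proposal is correct and uses the same decomposition as the paper: both arguments start from Theorem \ref{tauform} and bound the two summands separately, via $\lambda \ge \rk_p^{\mathrm{Sel}} E/K$ and $s_{E/\Kc} \ge s_{E/K}$. The only real difference is how the first inequality is obtained: the paper simply cites \cite[Theorem 1.9]{CIME}, whereas you reprove that inequality from scratch by the easy half of the control theorem (finite kernel of restriction $\Selp(E/K) \to \Selp(E/\Kc)^{\Gamma}$, duality between $\Gamma$-invariants and $\Gamma$-coinvariants, and rank--nullity for $\gamma-1$ on the finite-dimensional space $X(E/\Kc)\otimes\Qp$, which is legitimate because Assumption (IV) forces $X(E/\Kc)$ to be $\Lambda(\Gamma)$-torsion, so $\lambda<\infty$). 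Your route is self-contained where the paper's is a citation, but it is essentially the proof of the cited result rather than a genuinely different argument; the treatment of $s_{E/\Kc}\ge s_{E/K}$ (persistence of split multiplicative reduction under base change, finite decomposition in $\Kc$) matches what the paper takes for granted and proves elsewhere (Propositions \ref{semstab} and \ref{sparitas}).

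One justification in your sketch needs repair. The finiteness of the kernel $H^1(\Gamma, E(\Kc)[p^\infty])$ does \emph{not} follow merely from the coefficient module being cofinitely generated: for instance $H^1(\Zp, \Qp/\Zp)$ with trivial action is $\Hom(\Zp,\Qp/\Zp)\cong\Qp/\Zp$, of corank $1$, and a kernel of positive corank would break your corank comparison. What you actually need is that $E(\Kc)[p^\infty]$ is itself finite. This is true here: $H=\Gal(\Kcc/\Kc)$ is open in $\SL_2(\Zp)$ by Serre's theorem, and the fixed points of an open subgroup of $\SL_2(\Zp)$ acting on $(\Qp/\Zp)^2$ form a finite group (alternatively, invoke Imai's theorem on torsion of abelian varieties over cyclotomic extensions). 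Then $H^1(\Gamma,\cdot)$ of a finite module is finite, and with this patch your argument is complete.
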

\begin{proof}
This follows from $\lambda \ge \rk_p^{\mathrm Sel} E/K$ \cite[Theorem 1.9]{CIME} and $s_{E/\Kc} \ge s_{E/K}$.
\end{proof}

\begin{thm}[\cite{css}]
Assuming (I)-(IV), $\tau > 0$.
\end{thm}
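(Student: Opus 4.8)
The plan is to use Howson's formula to convert the non-commutative statement into a question purely about the cyclotomic $\mathbb{Z}_p$-extension. By Theorem \ref{tauform} we have $\tau = \lambda + s_{E/\Kc}$, and since both terms are non-negative it suffices to prove that they do not both vanish.

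I would first dispose of the case $s_{E/\Kc} \ge 1$, which is immediate and which Corollary \ref{taubound} already covers whenever a split multiplicative prime is visible over $K$ itself. This reduces the problem to showing $\lambda > 0$ whenever $E$ has no prime of split multiplicative reduction anywhere in $\Kc$; equivalently, that the cyclotomic dual Selmer group $X(E/\Kc)$ is infinite. Here it is essential that $K$ is large: by the running assumptions $\Kc$ contains $\mathbb{Q}(\mu_{p^\infty})$ and $K$ is chosen so that $\Gal(\Kcc/K)$ is pro-$p$, so $K$ in effect contains the mod-$p$ division field $\mathbb{Q}(E[p])$, and every prime of bad reduction of $E$ becomes semistable over $K$. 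The strategy is to show that such a prime, if it is not already split multiplicative in $\Kc$, nevertheless forces positive corank through the local Euler factor it contributes, while the ordinary primes above $p$ contribute their own non-negative corank via the good-reduction local conditions.

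Concretely, I would analyse $X(E/\Kc)$, which by Assumptions (I)--(IV) is $\Lambda(\Gamma)$-torsion, so that $\lambda = \dim_{\Qp}\bigl(X(E/\Kc) \otimes_{\Zp} \Qp\bigr)$ is finite and the claim $\lambda > 0$ amounts to the assertion that its characteristic power series is not a unit. Ruling out the unit case is precisely the input recorded in \cite{css}; I would therefore invoke that result directly and, exactly as in the proof of Theorem \ref{tauform}, verify that the hypotheses it requires are implied by (I)--(IV) --- in particular that Assumption (IV) places $X(E/\Kcc)$ in the category in which the argument of \cite{css} operates.

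The hard part will be this last case, $\lambda = 0$ with $s_{E/\Kc} = 0$. The elementary bound of Corollary \ref{taubound} is useless here, since it degenerates to $\tau \ge 0$ whenever $E$ has $p$-Selmer rank zero and no split multiplicative prime over $K$; the non-vanishing in this regime is genuinely Iwasawa-theoretic, and it is the one point where I do not expect to avoid the machinery of \cite{css}.
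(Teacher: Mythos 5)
There is a genuine gap, and it sits exactly where you locate it yourself. Your reduction via Theorem \ref{tauform} is harmless but circular in effect: since $\tau=\lambda+s_{E/\Kc}$, ruling out the case $\lambda=s_{E/\Kc}=0$ \emph{is} the theorem, and for that case you offer no argument beyond ``invoke the input recorded in \cite{css} directly''. But the relevant result there (Corollary 7.4, the one the paper uses) is not a cyclotomic statement about $\lambda$ or about $X(E/\Kc)$ at all; it is a statement about the $\Lambda(G)$-module $X(E/\Kcc)$, and it was proved under hypotheses stronger than Assumption (IV), namely with $X(E/\Kcc)$ itself finitely generated over $\Lambda(H)$. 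Under (IV) only $Y(E/\Kcc)=X(E/\Kcc)/X(E/\Kcc)(p)$ is known to be finitely generated over $\Lambda(H)$, so the citation does not apply ``directly''; the entire content of the paper's proof is the adaptation of the argument of \cite{css} from $X(E/\Kcc)$ to $Y(E/\Kcc)$. (A smaller slip: $\lambda>0$ is not equivalent to $X(E/\Kc)$ being infinite, since a positive $\mu$-invariant makes $X(E/\Kc)$ infinite with $\lambda=0$.)

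Concretely, two ingredients are missing from your sketch, and both are indispensable. First, non-vanishing: you never establish that the dual Selmer group over $\Kcc$ is nonzero, and no amount of structure theory can extract positive rank from a possibly-zero module; the paper gets $Y(E/\Kcc)\neq 0$ from \cite[Theorem 1.5]{CH}. Second, absence of pseudo-null submodules: the algebraic mechanism behind \cite[Corollary 7.4]{css} is that a $\Lambda(G)$-module which is finitely generated and torsion over $\Lambda(H)$ is pseudo-null over $\Lambda(G)$, so one must know that $Y(E/\Kcc)$ has no nonzero pseudo-null submodules before concluding $\rk_{\Lambda(H)}Y(E/\Kcc)>0$. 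The paper obtains this from \cite[Theorem 5.1]{OV} (which concerns $X(E/\Kcc)$) together with a transfer argument: the kernel of $X(E/\Kcc)\to Y(E/\Kcc)$ is annihilated by some $p^h$, so a pseudo-null submodule $N$ of $Y(E/\Kcc)$ with preimage $M$ satisfies $p^h M\cong N$, whence $p^h M$ is pseudo-null in $X(E/\Kcc)$ and therefore zero. Without these two steps your proposal proves nothing in the case $s_{E/\Kc}=0$, which you correctly identify as the hard one but do not close.
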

\begin{proof}
\cite[Theorem 1.5]{CH} means that $Y(E/\Kcc) \neq 0$.
The kernel of the projection $X(E/\Kcc) \to Y(E/\Kcc)$ is finitely generated over $\Lambda(G)$ therefore annullated by some $p^h$.
Let $N$ be a pseudo-null submodule of $Y(E/\Kcc)$ with preimage $M$ in $X(E/\Kcc)$. Then ${p^h}M$ isomorphic to $N$, hence pseudo-null. Under assumptions weaker than Howson's, \cite[Theorem 5.1]{OV} states that all nontrivial pseudo-null submodules of $X(E/\Kcc)$ are zero, hence $N={p^h}M=0$.

Then \cite[Corollary 7.4]{css} holds for $Y(E/\Kcc)$ instead of $X(E/\Kcc)$.
\end{proof}

\begin{prop}[\cite{howson}]
\label{taunov}
If then Assumptions (I)-(IV) for $K$ imply the same for $K'$, and $$\tau(E/K')=[K^{'cyc}:\Kc]\tau(E/K).$$ 
\end{prop}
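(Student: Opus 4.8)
The plan is to fix the field $\Kcc = \QQ(E[p^\infty])$ once and for all and to encode the passage from $K$ to $K'$ entirely through subgroups of $G = \Gal(\Kcc/K)$. I take $K \subseteq K' \subseteq \Kcc$ with $K'/K$ finite, so that $G' = \Gal(\Kcc/K')$ is an open subgroup of $G$ of index $[K':K]$, and $H' = \Gal(\Kcc/K^{'\mathrm{cyc}})$ is an open subgroup of $H$ with $[H:H'] = [K^{'\mathrm{cyc}}:\Kc]$. The crucial observation is that the dual Selmer group $X(E/\Kcc)$, and hence $Y(E/\Kcc) = X(E/\Kcc)/X(E/\Kcc)(p)$, does not change at all; only the coefficient ring changes from $\Lambda(H)$ to $\Lambda(H')$. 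Thus $\tau(E/K) = \rk_{\Lambda(H)} Y(E/\Kcc)$ and $\tau(E/K') = \rk_{\Lambda(H')} Y(E/\Kcc)$ are two ranks of one and the same module, and the whole statement reduces to comparing these.

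First I would check that (I)--(IV) propagate from $K$ to $K'$. Assumption (I) is independent of the field. For (II), good ordinary reduction at a place is preserved when passing to a place above it in a finite extension, so $E/K'$ inherits good ordinary reduction at all places over $p$. For (III), $G'$ is a closed subgroup of the pro-$p$ group $G$, hence pro-$p$. For (IV), since $[H:H'] = [K^{'\mathrm{cyc}}:\Kc]$ is finite, $\Lambda(H)$ is a finitely generated $\Lambda(H')$-module, so anything finitely generated over $\Lambda(H)$ --- in particular $Y(E/\Kcc)$ --- is finitely generated over $\Lambda(H')$; this places $X(E/\Kcc)$ in $\mathfrak{M}_{H'}(G')$.

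The heart of the argument is the rank formula, which I would deduce from two standard facts about Iwasawa algebras of pro-$p$ $p$-adic Lie groups without $p$-torsion. The first is that, for the open subgroup $H' \le H$, the algebra $\Lambda(H)$ is free as a left $\Lambda(H')$-module of rank $[H:H']$, with basis a set of coset representatives; this is visible at each finite level $\Zp[H/U]$ and then survives the inverse limit $\Lambda(H) = \varprojlim \Zp[H/U]$. The second is that $\Lambda(H') \setminus \{0\}$ is an Ore set in the Noetherian domain $\Lambda(H)$ whose localisation is the skew field of fractions $K(H)$, so that $K(H') \otimes_{\Lambda(H')} \Lambda(H) \cong K(H)$ with $[K(H):K(H')] = [H:H']$. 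Granting these, for $M = Y(E/\Kcc)$ I compute $K(H') \otimes_{\Lambda(H')} M \cong K(H) \otimes_{\Lambda(H)} M$, whence $\rk_{\Lambda(H')} M = \dim_{K(H')}\bigl(K(H)\otimes_{\Lambda(H)} M\bigr) = [K(H):K(H')]\cdot \dim_{K(H)}\bigl(K(H)\otimes_{\Lambda(H)}M\bigr) = [H:H']\cdot \rk_{\Lambda(H)} M$. Substituting $[H:H'] = [K^{'\mathrm{cyc}}:\Kc]$ yields the claimed formula $\tau(E/K') = [K^{'\mathrm{cyc}}:\Kc]\,\tau(E/K)$.

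I expect the main obstacle to be the noncommutative localisation step: verifying that inverting $\Lambda(H')\setminus\{0\}$ inside $\Lambda(H)$ produces exactly $K(H)$ rather than some larger or smaller Ore localisation, and that the resulting extension of skew fields has degree precisely $[H:H']$. This rests on $\Lambda(H)$ being a Noetherian Ore domain together with its freeness of rank $[H:H']$ over $\Lambda(H')$, from which both the finiteness and the exact value of the fraction-field degree follow; once freeness is in hand, the dimension bookkeeping above is routine.
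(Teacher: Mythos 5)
Your proposal is correct and takes essentially the same route as the paper: both propagate (I)--(IV) by viewing $G'\le G$ and $H'\le H$ as open subgroups acting on the unchanged module $Y(E/\Kcc)$, and both derive the rank formula from the fact that $\Lambda(H)$ is finitely generated (indeed free) of rank $[H:H']=[K'^{\mathrm{cyc}}:\Kc]$ over $\Lambda(H')$. The only difference is one of detail: you supply the Ore-localisation argument $K(H')\otimes_{\Lambda(H')}\Lambda(H)\cong K(H)$ establishing multiplicativity of ranks, a step the paper leaves implicit in its citation of Howson.
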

\begin{proof}
(I) and (II) are obviously unchanged. (III) holds because $G'$ is pro-$p$ as a subgroup of $G$.
Define $G',H'$ analogously for $K'$.
$\Lambda(H)$ is finitely generated of $\Lambda(H')$ rank $[H:H']=[K^{cyc'}:\Kc]$.
\end{proof}

This means that we only need to determine $\tau(E/K)$ when $K$ is minimal among fields satisfying $(I)-(IV)$, and then we can use the above formula. \emph{Therefore from now on we \textbf{assume} that $K$ is minimal in this sense.}

\begin{remark}
Our minimal $K$ will turn out to be same as the field $K$ in \cite{grss} if there is a $p$-isogeny and $\QQd$ otherwise.
\end{remark}

The quantitative meaning of $\tau$ is given by the following,
\begin{prop}[Coates, Howson]
\label{lambdanov}
Assume (I)-(IV).
Let $K_n=\QQ(E[p^n])$. By Serre's theorem \cite{serre} there exists $m$ such that $$\Gal(\Kcc/K_n) \cong \operatorname{ker} \left( GL_2(\Zp) \to GL_2(\ZZ~\mathrm{mod}~p^m) \right).$$
Then $$\lambda(E/K_n)=\tau(E/K_m)p^{3(n-m)}+O(p^{2n}).$$
\end{prop}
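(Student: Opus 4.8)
The plan is to bypass the homological machinery underlying the original statement and instead read off the asymptotic directly from Theorem~\ref{tauform} and Proposition~\ref{taunov}, reducing the whole proposition to an elementary count of primes. Fix $n \ge m$, so that (by Serre's theorem) $\Gal(\Kcc/K_n)$ is the full level-$p^n$ principal congruence subgroup of $\GLZp$, and so that the minimal field $K$ is contained in $K_m$ (whence $K_m$ satisfies (I)--(IV) by Proposition~\ref{taunov}). Applying Proposition~\ref{taunov} to the pair $K_m \subseteq K_n$ gives $\tau(E/K_n) = [K_n^{\mathrm{cyc}} : K_m^{\mathrm{cyc}}]\,\tau(E/K_m)$. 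First I would compute this index: since the determinant is the cyclotomic character via the Weil pairing, $\Gal(\Kcc/K_k^{\mathrm{cyc}})$ is the level-$p^k$ principal congruence subgroup of $\SL_2(\Zp)$, so
$$[K_n^{\mathrm{cyc}} : K_m^{\mathrm{cyc}}] = \bigl|\ker\bigl(\SL_2(\ZZ/p^n) \to \SL_2(\ZZ/p^m)\bigr)\bigr| = p^{3(n-m)}.$$
Combining this with Theorem~\ref{tauform} applied over $K_n$, namely $\tau(E/K_n) = \lambda(E/K_n) + s_{E/K_n^{\mathrm{cyc}}}$, yields the exact identity
$$\lambda(E/K_n) = \tau(E/K_m)\,p^{3(n-m)} - s_{E/K_n^{\mathrm{cyc}}}.$$
Thus the proposition is equivalent to the single estimate $s_{E/K_n^{\mathrm{cyc}}} = O(p^{2n})$.

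Second, I would reduce the count over $K_n^{\mathrm{cyc}}$ to one over $K_n$. The quantity $s_{E/K_n^{\mathrm{cyc}}}$ counts primes lying above the finitely many rational primes $\ell$ of multiplicative reduction. For $\ell \neq p$ the extension $K_n^{\mathrm{cyc}}/K_n$ is unramified at $\ell$ with nontrivial Frobenius in $\Gal(K_n^{\mathrm{cyc}}/K_n) \cong \Zp$, so each prime of $K_n$ over $\ell$ has only boundedly many primes of $K_n^{\mathrm{cyc}}$ above it. Hence $s_{E/K_n^{\mathrm{cyc}}} \ll \sum_\ell \#\{\text{primes of } K_n \text{ over } \ell\}$, the sum taken over the finite set of multiplicative primes $\ell$.

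Third --- and this is the crux --- I would bound the number of primes of $K_n$ above a multiplicative prime $\ell$ by $O(p^{2n})$ through a local degree computation. Over $\QQ_\ell$ the curve $E$ is a quadratic twist of a Tate curve with parameter $q$ satisfying $v_\ell(q) = -v_\ell(j) > 0$, so the local Galois representation on $E[p^n]$ is reducible of the form $\left(\begin{smallmatrix} \chi & * \\ 0 & 1 \end{smallmatrix}\right)$ with $\chi$ the mod-$p^n$ cyclotomic character. Consequently the completion $K_{n,w}$ contains both $\QQ_\ell(\mu_{p^n})$, of degree $\gg p^n$ (the order of $\ell$ in $(\ZZ/p^n)^\times$), and the Kummer extension generated by $q^{1/p^n}$, which contributes a further factor $\gg p^n$ because $v_\ell(q)$ is a fixed positive integer. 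Hence $[K_{n,w} : \QQ_\ell] \gg p^{2n}$, and since $[K_n : \QQ] \le |\GL_2(\ZZ/p^n)| = O(p^{4n})$, the number of primes over $\ell$ is $O(p^{2n})$. Summing over the finitely many $\ell$ gives $s_{E/K_n^{\mathrm{cyc}}} = O(p^{2n})$, completing the argument.

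The main obstacle is precisely this last local estimate: it is the nontriviality of the Kummer (unipotent) part of the local representation that forces the extra factor $p^n$ beyond the purely cyclotomic one. If only the cyclotomic factor were present the local degree would be $\gg p^n$, the prime count would be $O(p^{3n})$, and the proposition would fail --- so the argument genuinely exploits that the reduction is multiplicative (i.e. $v_\ell(q) > 0$) rather than potentially good. One must also track the fixed $p$-power discrepancies, namely the $p$-part of the order of $\ell$ modulo $p^n$ and the $p$-part of $v_\ell(q)$, but these affect only the implied constant and not the exponent, so they are harmless for an $O(p^{2n})$ bound.
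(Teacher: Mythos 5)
Your skeleton is the same as the paper's: the exact identity $\lambda(E/K_n)=\tau(E/K_m)p^{3(n-m)}-s_{E/K_n^{\mathrm{cyc}}}$ (the paper extracts it from Howson's Corollary 2.12, you from Theorem \ref{tauform} combined with Proposition \ref{taunov}, which is the same content), followed by the single estimate $s_{E/K_n^{\mathrm{cyc}}}=O(p^{2n})$. The genuine difference is how that estimate is proved: the paper simply cites \cite[Lemma 2.8]{fragments}, that the decomposition subgroup of a potentially multiplicative prime has dimension $2$ in the $4$-dimensional group $G$, and counts Lie-theoretically; you make the same fact explicit through the Tate parametrisation, bounding the local degree $[K_{n,w}:\QQ_\ell]\gg p^{2n}$ by the cyclotomic and Kummer directions. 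That local computation is correct (both factors are $\gg p^n$ with constants depending only on $\ell$, $p$ and $v_\ell(q)$), and it is a legitimate elementarisation of the citation.

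There is, however, a gap in your second step, the reduction of the count over $K_n^{\mathrm{cyc}}$ to a count over $K_n$. You assert that each prime $w$ of $K_n$ above $\ell$ has \emph{boundedly} many primes of $K_n^{\mathrm{cyc}}$ above it because its Frobenius is nontrivial in $\Gal(K_n^{\mathrm{cyc}}/K_n)\cong\Zp$. Nontriviality gives finiteness for each fixed $n$ and $w$, but not a bound uniform in $n$: identifying $\Gal(K_n^{\mathrm{cyc}}/K_n)$ with $1+p^n\Zp\subset\Zp^\times$, the fibre over $w$ has size $[1+p^n\Zp:\overline{\langle N(w)\rangle}]=p^{v_p(N(w)-1)-n}$, and a priori $v_p(N(w)-1)$ can exceed $n$ by as much as $v_p(f(w/\ell))$, which is only trivially bounded by $\log_p[K_n:\QQ]\approx 4n$. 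If the fibres were as large as $p^{n}$, your bound would degrade to $O(p^{3n})$, the same size as the main term, and the proposition would be vacuous, so this uniformity is exactly the point that must be argued. It is fixable with material you already have: $\Gal(K_n/K_m)=\ker\bigl(\GL_2(\ZZ/p^n)\to\GL_2(\ZZ/p^m)\bigr)$ is a $p$-group of exponent $p^{n-m}$, so residue degrees in $K_n/K_m$ have $p$-part at most $p^{n-m}$; hence $v_p(N(w)-1)\le v_p(N(v)-1)+(n-m)$ for $v$ the prime of $K_m$ below $w$, and the fibre over $w$ is at most the finite, $n$-independent number of primes of $K_m^{\mathrm{cyc}}$ above $v$ (Proposition \ref{sparitas}). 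Alternatively, avoid uniform fibre bounds altogether by exploiting the trade-off: writing $f=f(w/\ell)$ and $e$ for the ramification index, the fibre is at most $p^{c_0}f/p^n$ with $c_0=v_p(\ell^{d}-1)$ fixed, while the number of primes of $K_n$ over $\ell$ equals $[K_n:\QQ]/(ef)$, so the product is at most $p^{c_0}[K_n:\QQ]/(e\,p^n)\ll p^{2n}$, using only your Kummer-theoretic bound $e\gg p^n$.
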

\begin{proof}
Take the sequence of subgroups $H_n=\Gal(K_\infty/K_n^{cyc})$. These are $p$-adic Lie groups of dimension $3$, and $|H_m  \colon  H_n|=p^{3(n-m)}$. Then \cite[Corollary 2.12]{howson} means that $\lambda(E/K_n)=\tau(E/K_m)p^{3(n-m)} - s_{E/K_n}$.

The decomposition subgroup $D_q$ of a prime $q$ with multiplicative reduction of $E$ has dimension $2$ as a $p$-adic Lie subgroup of $G$ \cite[Lemma 2.8]{fragments}, therefore these primes each decompose into $O(p^{2n})$ primes over $K_n$. Hence $s_{E/K_n}=O(p^{2n})$.
\end{proof}

Therefore giving a lower bound to $\tau$ implies a lower bound for the growth of $\lambda$ in the tower of division fields of $E$.

\ifexample
\subsection{Example}

The most cited example (found, e.g.~in \cite{howson}) is the following.

Let $E$ be the modular curve $X_1(11)$, which is an elliptic curve with Cremona label $11a3$ and Weierstrass minimal equation
$$E~\colon \quad y^2+y=x^3-x$$
This curve has no complex multiplication. It has split multiplicative reduction at $11$ and good redution everywhere else.
It also has rank $0$ and a cyclic $5$-torsion subgroup over $\QQ$.

Let $p=5$ and $K=\QQ(\mu_5)$. Conditions (I) and (II) hold. 
The torsion subgroup means that $\Gal(\QQd / \QQ)$ is isomorphic to the mod $p$ matrix group 
$\left( \begin{smallmatrix} 1 & \Fp \\ 0 & \Fp^\times \end{smallmatrix} \right)$ of order $20$.
$\QQdiv$ is pro-$p$ Galois extension of $\QQd$ which has degree $p$ over $K$ therefore condition (III) holds.
Conditions (I) and (II) are easy to see.

It is well known that $\Selp(E/\Kc)=\Selp(E/\QQc)$ is finite for this curve therefore condition (IV) must hold with $\lambda=0$.
This leaves $\tau=s_{E/\Kc}$. $11$ has order $4$ mod $25$ so it decomposes into $4$ primes over $\Kc$ therefore $\tau=s_{E/\Kc}=4$.

Heuristically, curves such as $E$ with prime conductor, having rank $0$ and $p$-torsion in $\QQ$, should have a relatively small $\tau$.
However, we will see later that they cannot have $\tau=1$ by the strict conditions imposed on elliptic curves with $p$-torsion.

Another example for $p=7$, also having $7$-torsion is the one given in \cite[equation (89)]{cfks}. 
Here, $\tau=3$, which coincides with our lower bound for $7$-torsion curves.

\fi

\section{The parity of $\tau$}

\begin{thm}
\label{taupar}
Suppose that Conditions (I)-(IV) hold for some Galois number field $K \subseteq \Kcc$.
Then we have 
$$\tau(E/K) \equiv \frac{[K \colon \QQ]}{2} \pmod{2}$$
\end{thm}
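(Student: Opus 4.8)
The plan is to combine Howson's decomposition $\tau = \lambda + s_{E/\Kc}$ (Theorem \ref{tauform}) with an algebraic functional equation on the cyclotomic line, and to show that after this reduction the only surviving contribution to the parity comes from the archimedean places. The starting observation is that $\Gal(\Kcc/K)$ being pro-$p$ forces its determinant --- the cyclotomic character --- to land in $1+p\Zp$, so that $\mu_p \subseteq K$; for $p \ge 5$ this makes $K$ totally complex, and hence $[K \colon \QQ]/2$ is exactly the number of its complex places. Thus the target quantity $(-1)^{[K:\QQ]/2}$ is precisely the product of the archimedean local root numbers of $E/K$, each of which equals $-1$. This is the conceptual core: the integer $[K\colon\QQ]/2$ is a count of complex places, not an accident of the formula.

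First I would invoke the self-duality of the $p$-adic Tate module: the Weil pairing gives $T_p E \cong \Hom(T_p E, \Zp)(1)$, which endows the characteristic element of the $\Lambda(\Gamma)$-torsion module $X(E/\Kc)$ with a functional equation relating it to its $\iota$-twist. Comparing orders of vanishing along $\Gamma$ expresses $(-1)^{\lambda}$ as the global sign $\epsilon$ of this functional equation, and $\epsilon$ factors as a product of local terms $\prod_v \epsilon_v$ over the places $v$ of $K$. This is the input I would extract from \cite{cfks}; the simplification is that working on the one-dimensional cyclotomic line, rather than with the full non-commutative functional equation over $\Lambda(G)$, lets one take the $\epsilon_v$ to be classical local root numbers.

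Next I would evaluate the local terms. At the archimedean places $\epsilon_v = -1$, contributing $(-1)^{[K:\QQ]/2}$ by the count above. At finite places of good reduction $\epsilon_v = +1$; at the places above $p$ good ordinary reduction (Assumption (II)) makes the local term trivial. The remaining multiplicative and additive places are where the split-multiplicative correction enters: the factor $(-1)^{s_{E/\Kc}}$ supplied by Howson's formula should cancel exactly the product of the finite bad-place signs, so that $(-1)^{\lambda + s_{E/\Kc}} = (-1)^\tau$ is left equal to the archimedean product $(-1)^{[K:\QQ]/2}$.

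\textbf{The hard part} will be the finite-place bookkeeping: showing that the bad-reduction and $p$-adic local signs, taken together with the split-multiplicative count $s_{E/\Kc}$ over the (large) cyclotomic field, contribute trivially to the parity. This is exactly the place-by-place computation underlying \cite{cfks}, and carrying it out requires controlling the local Galois representation over $\Kcc$: at a bad place the decomposition group is an open subgroup of $\GLZp$ of dimension $\ge 2$, which is what forces the non-archimedean contributions to pair off or to match $s_{E/\Kc}$. The role of Howson's formula is precisely to let one run this argument on the cyclotomic $\Zp$-line instead of confronting the non-commutative functional equation directly, which is the sense in which the proof is a simplification.
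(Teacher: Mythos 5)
Your overall architecture is the same as the paper's (Howson's formula $\tau=\lambda+s_{E/\Kc}$, identification of $(-1)^{\lambda}$ with a global sign, archimedean places contributing $(-1)^{[K\colon\QQ]/2}$, bad places cancelling against $s_{E/\Kc}$), but the two steps you defer are precisely the content of the theorem, and as formulated they do not go through. The central gap is your claim that self-duality of $T_pE$ endows the characteristic element of $X(E/\Kc)$ with a functional equation whose sign \emph{factors as a product of classical local root numbers}. Self-duality only gives $\iota$-invariance of the characteristic ideal, and comparing orders of vanishing yields Greenberg's congruence $\lambda \equiv \rk_p^{\mathrm{Sel}}E/K \pmod 2$ (\cite[Proposition 3.10]{CIME}) --- a statement relating two algebraic invariants. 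Identifying $(-1)^{\rk_p^{\mathrm{Sel}}E/K}$ with $\prod_v w_v(E/K)$ is the $p$-parity conjecture, a genuinely arithmetic theorem that cannot be extracted from duality or from any purely algebraic functional equation. The paper obtains it (Proposition \ref{ppar}) from Dokchitser--Dokchitser's isogeny theorem \cite{dokisog}, and that theorem is applicable only because Assumption (III) forces $\Gal(K(E[p])/K)$ to have order dividing $p$, whence $E$ has a $K$-rational subgroup of order $p$ (Proposition \ref{ptorzio}). Your proposal uses the pro-$p$ hypothesis only to conclude $\mu_p\subseteq K$; without the isogeny input, the global sign you manipulate is not known to equal $w(E/K)$.

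The same missing torsion point also sinks what you call the hard part, the finite-place bookkeeping. The cancellation of bad-place signs against $(-1)^{s_{E/\Kc}}$ cannot be arranged ``place by place'' in general: additive places carry local root numbers $\pm 1$ that are invisible to $s_{E/\Kc}$, and non-split multiplicative places contribute $+1$ while remaining non-split (and uncounted by $s$) up the odd-degree tower $\Kc/K$. The paper's resolution is structural, not combinatorial: the $K$-rational $p$-torsion subgroup rules out additive reduction away from $p$ via Lenstra--Oort \cite{lenstraoort}, and Serre--Tate \cite{ST} then shows every bad place of $E/K$ is already \emph{split} multiplicative (Proposition \ref{semstab}). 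Only after this does Rohrlich's formula give cleanly $w(E/K)=(-1)^{[K\colon\QQ]/2}(-1)^{s_{E/K}}$ (Proposition \ref{rootnum}), and the last point is the elementary observation that $s_{E/K}\equiv s_{E/\Kc}\pmod 2$ because each prime of $K$ splits into a $p$-power (hence odd) number of primes of $\Kc$ (Proposition \ref{sparitas}). So the fix is to use Assumption (III) arithmetically: extract the rational $p$-isogeny first, and let it do both jobs --- enabling the isogeny case of $p$-parity and collapsing all bad reduction to the split multiplicative case.
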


\begin{remark}
Theorem \ref{taupar} can be obtained as a consequence of Corollary 5.7. in \cite{cfks} for $F=\QQ$, $F'=K$. Using its notation,
$$\tau \equiv \sum_{\alpha \in \hat{\Omega}, ~ \alpha^2=1} [\mathcal{L} \colon \QQ]/2=|\hat{\Omega}[2]| \cdot [\mathcal{L} \colon \QQ]/2 \equiv |\Omega|  \cdot [\mathcal{L} \colon \QQ]/2 = [K \colon \QQ]/2 \pmod{2}$$
\end{remark}

We give a direct, somewhat simpler proof using Theorem \ref{tauform}, a case of the $p$-parity conjecture and some lemmas about the field $K$. We will use these lemmas in subsequent sections as well.

\begin{prop}
\label{ptorzio}
\begin{enumerate}[(a)]
\item $\Gal(K(E[p])/K)$ has order dividing $p$.
\item $E/K$ has a nontrivial $p$-torsion subgroup.
\end{enumerate}
\end{prop}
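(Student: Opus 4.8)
The plan is to work entirely on the Galois side. By Serre's theorem \cite{serre}, $G=\Gal(\Kcc/\QQ)$ is an open subgroup of $\GLZp$; write $\pi\colon G \to \GL_2(\Fp)$ for reduction modulo $p$ and $\overline{G}=\pi(G)=\Gal(\QQd/\QQ)$ for its image, so that $\Ker\pi = G\cap\Gamma(p)$, where $\Gamma(p)$ is the principal congruence subgroup. First I would identify the minimal field group-theoretically. Since $K$ is assumed Galois over $\QQ$ and minimal with $\Gal(\Kcc/K)$ pro-$p$, it is the fixed field of the \emph{maximal normal pro-$p$ subgroup} $N \trianglelefteq G$, and I claim $N = \pi^{-1}(O_p(\overline{G}))$, the preimage of the largest normal $p$-subgroup of $\overline{G}$. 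Indeed, $\Ker\pi = G \cap \Gamma(p)$ is open, normal, and pro-$p$ (the principal congruence subgroup is pro-$p$ for $p$ odd), so any normal pro-$p$ subgroup of $G$ has image a normal $p$-subgroup of $\overline{G}$, hence contained in $O_p(\overline{G})$, and is therefore contained in $\pi^{-1}(O_p(\overline{G}))$; conversely $\pi^{-1}(O_p(\overline{G}))$ is normal and is an extension of the pro-$p$ group $G\cap\Gamma(p)$ by the finite $p$-group $O_p(\overline{G})$, hence pro-$p$ and thus maximal. In particular $N \supseteq G\cap\Gamma(p)=\Gal(\Kcc/\QQd)$, which gives $K \subseteq \QQd$ and $\Gal(\QQd/K) \cong O_p(\overline{G})$.

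Granting this, part (a) is a Sylow count. Since $K \subseteq \QQd$ we have $K(E[p]) = \QQd$, so $\Gal(K(E[p])/K) \cong O_p(\overline{G})$ is a $p$-subgroup of $\GL_2(\Fp)$. As $|\GL_2(\Fp)| = p(p-1)^2(p+1)$ is divisible by $p$ exactly once, every $p$-subgroup of $\GL_2(\Fp)$ has order dividing $p$, proving (a).

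For part (b) I would invoke the standard fixed-point lemma: a finite $p$-group acting linearly on a nonzero $\Fp$-vector space has a nonzero fixed vector (count orbits modulo $p$, using that $|E[p]| = p^2 \equiv 0$). Applying this to the action of $O_p(\overline{G})$ on $E[p]\cong \Fp^2$ produces a nonzero fixed vector. Because $K \subseteq \QQd$, the action of $\Gal(\overline{\QQ}/K)$ on $E[p]$ factors through $\Gal(\QQd/K) \cong O_p(\overline{G})$, so this fixed vector is exactly a $K$-rational point of order $p$; hence $E(K)[p]\neq 0$.

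The main obstacle is the first paragraph. One must be careful that $\overline{G}$ need not equal $\GL_2(\Fp)$ (it is a proper subgroup precisely when $E$ admits a rational $p$-isogeny, which is the case of real interest, where $O_p(\overline{G})$ becomes nontrivial), and one must verify cleanly both that the minimal Galois $K$ corresponds to $\pi^{-1}(O_p(\overline{G}))$ and that this subgroup is open, so that $K$ is genuinely a number field. Once the correspondence $\Gal(\QQd/K)\cong O_p(\overline{G})$ is established, both parts follow immediately --- (a) from the exact power of $p$ dividing $|\GL_2(\Fp)|$, and (b) from the $p$-group fixed-point lemma.
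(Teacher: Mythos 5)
Your proof is correct, but it takes a genuinely different (and heavier) route than the paper's. The paper proves (a) in one line from Assumption (III) alone: $\Gal(K(E[p])/K)$ is a quotient of the pro-$p$ group $\Gal(\Kcc/K)$, hence a finite $p$-group, and it embeds into $\GL_2(\Fp)$ because it acts faithfully on $E[p]$; since $|\GL_2(\Fp)|=p(p^2-1)(p-1)$ is divisible by $p$ exactly once, the order divides $p$ --- the same Sylow count you use, but with no appeal to minimality of $K$, to $K/\QQ$ being Galois, or to Serre's open image theorem. For (b) the paper argues that a subgroup of order $p$ is a Sylow $p$-subgroup of $\GL_2(\Fp)$, hence conjugate to the unipotent subgroup $\left(\begin{smallmatrix}1 & \Fp\\ 0 & 1\end{smallmatrix}\right)$, which fixes a line in $E[p]$; you instead use the orbit-counting fixed-point lemma for $p$-groups, which is equally standard, avoids choosing a basis, and handles the trivial-group case uniformly. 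Your preliminary identification of the minimal $K$ as the fixed field of $\pi^{-1}(O_p(\overline{G}))$ is correct (pro-$p$ subgroups are compact, hence closed; images and preimages behave as you claim; and an extension of a finite $p$-group by a pro-$p$ group is pro-$p$), and it yields strictly more than the proposition needs, namely $K\subseteq\QQd$ and $\Gal(\QQd/K)\cong O_p(\overline{G})$ --- facts the paper records only informally, in the remark after Proposition \ref{taunov} and in its discussion of the Borel case. The price is scope: as written, your argument applies only to the minimal $K$, whereas Theorem \ref{taupar} is stated for an arbitrary Galois number field $K$ satisfying (I)--(IV), and the paper's proof covers that case directly. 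To recover full generality you would add the easy observation that any such $K'$ contains the minimal $K$ (by your own maximality argument), and that both conclusions pass upward: restriction embeds $\Gal(K'(E[p])/K')$ into $\Gal(K(E[p])/K)$, and $E(K)[p]\subseteq E(K')[p]$.
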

\begin{proof}

$\Gal(K(E[p])/K)$ is a factor group of $\Gal(\Kcc/K)$ which is pro-$p$ by our assumptions.
$\Gal(K(E[p])/K)$ acts on $E[p]$ $\Fp$-linearly so it has order dividing $|GL_2(\Fp)|=p(p^2-1)(p-1)$. 
This means $\Gal(K(E[p]/K))$ has $p$-power order dividing $p(p^2-1)(p-1)$, which proves part (a).

If $\Gal(K(E[p])/K))$ is trivial, claim (b) is also trivial.
Otherwise it has order $p$ and so it is a $p$-Sylow subgroup in $\Gal(K(E[p])/K)$. 
As such, it is conjugate to $\left( \begin{smallmatrix} 1 & \Fp \\ 0 & 1 \end{smallmatrix} \right)$ when written in a suitable basis of $E[p]$.
Hence it fixes a one-dimensional $\Fp$-subspace of $E[p]$. 
\end{proof}

\begin{prop}
\label{semstab}
All bad reductions of $E/K$ are split multiplicative.
\end{prop}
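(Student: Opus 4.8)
The plan is to read off the reduction type at each bad place from the mod-$p$ representation $\overline{\rho}\colon\Gal(\bar K/K)\to\Aut(E[p])\cong GL_2(\Fp)$, exploiting that its image is forced to be tiny. By Assumption~(II), $E$ has good reduction at every place above $p$, so any bad place $v$ satisfies $v\nmid p$; fix such a $v$ and write $G_v=\Gal(\bar K_v/K_v)$ for a decomposition group, $I_v$ for the inertia subgroup and $\Frob_v$ for a Frobenius lift. By Proposition~\ref{ptorzio}(a) the image of $\overline{\rho}$ has order dividing $p$, so it is a $p$-subgroup of $GL_2(\Fp)$ and hence consists of unipotent matrices. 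Consequently $\overline{\rho}|_{G_v}$ is conjugate to an upper-triangular unipotent representation: both of its Jordan--H\"older characters $G_v\to\Fp^\times$ are trivial, $\overline{\rho}(I_v)$ is unipotent, and $\overline{\rho}(\Frob_v)$ has all eigenvalues equal to $1$.

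First I would rule out additive reduction. For $v\nmid p$ the reduction is semistable exactly when $\overline{\rho}(I_v)$ is unipotent, whereas additive reduction makes it non-unipotent: in the potentially good case Serre--Tate theory gives a cyclic inertia image of order $e\in\{2,3,4,6\}$, and in the potentially multiplicative case $E$ is a ramified quadratic twist of a Tate curve, so some element of $\overline{\rho}(I_v)$ has eigenvalue $-1$. As $p\ge 5$, neither possibility fits inside a $p$-group, so $\overline{\rho}(I_v)$ being unipotent rules out additive reduction and $v$ is a place of multiplicative reduction.

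It remains to exclude non-split multiplicative reduction. There $\overline{\rho}|_{G_v}$ is the split Tate-curve representation tensored with the unramified quadratic character $\chi$, so $\chi$ occurs as a Jordan--H\"older factor of $\overline{\rho}|_{G_v}$; concretely $\Frob_v$ acts by $\chi(\Frob_v)=-1$ on the corresponding subquotient. This contradicts the triviality of both Jordan--H\"older characters (equivalently $-1\ne 1$, since $p$ is odd). Hence the reduction is split multiplicative. This is consistent with the constraint: in the split case the Jordan--H\"older characters are $\chi_{\mathrm{cyc}}$ and the trivial character, and $\chi_{\mathrm{cyc}}|_{G_v}$ is trivial because $\mu_p\subseteq K$.

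The one point needing care is the additive case: one must confirm that at a place $v\nmid p$ additive reduction really forces $\overline{\rho}(I_v)$ to be non-unipotent, so that (for $p\ge 5$) it cannot lie in a $p$-group. This is classical local theory of elliptic curves (Grothendieck's monodromy theorem and Serre--Tate); everything else is an immediate eigenvalue computation driven by Proposition~\ref{ptorzio}(a).
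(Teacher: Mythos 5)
Your proof is correct, but it takes a genuinely different route from the paper's. The paper also begins from Proposition~\ref{ptorzio}, but it leans on part~(b) (a $K$-rational $p$-torsion point): additive reduction at $v \nmid p$ is excluded by the Lenstra--Oort theorem \cite{lenstraoort} forbidding $p$-torsion ($p>3$) on curves with purely additive reduction, and non-split multiplicative reduction is excluded by observing that splitness is governed by solvability of $x^2+c_6$ in the residue field, a quadratic condition insensitive to the odd-degree extension $K(E[p])/K$, over which the classical Serre--Tate result \cite{ST} forces all bad reduction to be split. You instead use only part~(a): the global mod-$p$ image has order dividing $p$, hence is unipotent, and you derive local contradictions from this single fact --- a nontrivial finite prime-to-$p$ inertia image at additive potentially good places, an inertia element with eigenvalue $-1$ for ramified quadratic twists of Tate curves, and a Frobenius eigenvalue $-1$ for the unramified quadratic twist in the non-split case. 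Your argument is more uniform (one eigenvalue principle covers every case), avoids both Lenstra--Oort and the $c_6$ splitting criterion, and never needs the rational torsion point; the paper's argument, in exchange, needs less local representation theory and quotes ready-made global statements. One small imprecision to fix: at places of $K$ of residue characteristic $2$ or $3$, the inertia image in the potentially good case need not be cyclic of order in $\{2,3,4,6\}$ (it can be quaternion of order $8$, or of order $24$, cf.\ \cite{serre}); however, its order always divides $24$ and is therefore prime to $p \ge 5$, so your conclusion that it cannot sit inside a group of order dividing $p$ is unaffected.
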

\begin{proof}
It is well known that good and split multiplicative reductions remain that way through field extensions so it is enough to
prove the claim for $K$.

When $K$ contains $\QQ(E[p])$, this is a classical result from \cite{ST}.
Otherwise we have by Proposition \ref{ptorzio} part (a) that $\Gal(K(E[p])/K)$ has order $p$.

For places lying above $p$ our assumptions assure good reduction.

Suppose for contradiction that $E/K$ has additive reduction at some $v$ not lying above $p$.
Then \cite[Theorem 1.13.]{lenstraoort} applies and rules out $p$-torsion for $p>3$. This contradicts Proposition \ref{ptorzio}, so $E/K$ is semistable.

Since splitting of a multiplicative reduction depends on solvability of $x^2+c_6$ in the local residue field 
(where $c_6$ is computed from coefficients of $E$). This is unchanged in the degree $p$ extension $K(E[p])/K$, so by \cite{ST}, bad reductions are already split in $K$.
\end{proof}

\begin{prop}
\label{rootnum}
\begin{enumerate}[(a)]
\item The local root number for a place $v$ is
$w_v(E/K)=-1$ if $v$ is Archimedean or $E/K$ has split multiplicative reduction at $v$. Otherwise, $w_v(E/K)=1$.
\item Let $s_{E/K}$ be the number of split multiplicative reductions of $E$ in $K$.
$$w(E/K)=(-1)^{[K \colon \QQ]/2} (-1)^{s_{E/K}}$$

\end{enumerate}
\end{prop}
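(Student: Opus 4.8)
The plan is to compute the global root number $w(E/K)$ as the product of local root numbers $w_v(E/K)$ over all places $v$ of $K$, and to show that the only places contributing $-1$ are the Archimedean places and the split multiplicative ones. For part (a), the Archimedean contribution is classical: over $K_v = \CC$ (or $\RR$) one has $w_v = -1$ for an elliptic curve. At the non-Archimedean places I would invoke the known formulas for local root numbers of elliptic curves. By Proposition \ref{semstab}, $E/K$ has only good or split multiplicative reduction, which drastically simplifies the case analysis: there are no additive or potentially-multiplicative places to worry about. At a place of good reduction the local root number is $+1$, and at a place of split multiplicative reduction it is $-1$ (the standard values for semistable reduction). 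This covers every case and establishes part (a).

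For part (b), I would assemble the global root number as
$$
w(E/K) = \prod_v w_v(E/K) = \prod_{v \mid \infty} w_v(E/K) \cdot \prod_{v \text{ split mult.}} w_v(E/K),
$$
since all remaining local factors equal $+1$ by part (a). The second product is $(-1)^{s_{E/K}}$ by definition of $s_{E/K}$. For the first product I need to count the Archimedean places of $K$. Since $K$ is a Galois number field and $p \ge 5$, the field $K$ contains $K(\mu_p) \supseteq \mu_p$ with $p$ odd, so $K$ is totally complex (it cannot have a real embedding once it contains a nontrivial $p$-th root of unity for odd $p$). Hence $K$ has exactly $[K \colon \QQ]/2$ complex places and no real places, each contributing a factor $-1$, giving $\prod_{v \mid \infty} w_v = (-1)^{[K \colon \QQ]/2}$. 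Multiplying the two pieces yields the claimed formula.

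The main obstacle I anticipate is justifying cleanly that the relevant Archimedean count is exactly $[K \colon \QQ]/2$, i.e.\ that $K$ is totally imaginary. This follows because $K \supseteq \QQ(\mu_p)$ and $\QQ(\mu_p)$ is totally complex for odd $p$, but I should make sure the inclusion $\mu_p \subseteq K$ is genuinely forced by the setup. Recall that $K$ contains $\Kc = K(\mu_{p^\infty})$ only after adjoining, but in fact the Weil pairing gives $\mu_{p^n} \subseteq K(E[p^n])$, and the minimality/Galois hypotheses on $K$ together with Assumption (III) ensure the appropriate roots of unity lie in $K$ itself; I would spell out that $\mu_p \subseteq K$ (so that $[K\colon\QQ]$ is even and the half-integer $[K\colon\QQ]/2$ makes sense). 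The remaining ingredients—the local root number values—are standard and require only citing the semistable-reduction formulas, so I do not expect difficulty there.
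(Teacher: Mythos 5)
Your proposal is correct and follows essentially the same route as the paper: the paper likewise cites the standard local root number values (Rohrlich's Theorem 2 in \cite{rohr}) for Archimedean, good, and split multiplicative places, uses Proposition \ref{semstab} to exclude all other reduction types, and obtains part (b) by multiplying local factors and counting $[K\colon\QQ]/2$ Archimedean places since $\mu_p\subset K$ forces $K$ to be totally imaginary. The one point you flagged as needing care --- that $\mu_p\subseteq K$ --- is indeed forced by Assumption (III): $\Gal(K(\mu_p)/K)$ is both a quotient of the pro-$p$ group $\Gal(\Kcc/K)$ and a subgroup of $(\ZZ/p\ZZ)^\times$ of order prime to $p$, hence trivial.
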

\begin{proof}
\begin{enumerate}[(a)]
\item For Archimedean and good or split multiplicative non-Archimedean places, this is a special case of Rohrlich's Theorem 2 in \cite{rohr}.
(In his notation, $\tau$ should be the trivial character, and $\chi$ will also be trivial in the split case.)
Other possibilites are ruled out by Proposition \ref{semstab}
\item This follows by multiplying the local root numbers given by (a).
The number of Archimedean valuations of $K$ is $[K  \colon  \QQ]/2$ since $\mu_p \subset K$ so $K$ is totally imaginary.
\end{enumerate}
\end{proof}

\begin{prop}
\label{sparitas}
There are finitely many primes in ${\Kc}$ over any prime of $K$. Furthermore,
$s_{E/\Kc} \equiv s_{E/K} \pmod 2$
\end{prop}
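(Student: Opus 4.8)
The plan is to use that $\Kc/K$ is a $\mathbb{Z}_p$-extension, so that $\Gamma=\Gal(\Kc/K)\cong\Zp$ and every closed subgroup of $\Gamma$ is either trivial or open of $p$-power index. For a prime $v$ of $K$ the number of primes of $\Kc$ above it equals $[\Gamma:D_v]$, where $D_v\subseteq\Gamma$ is the decomposition group; hence both assertions reduce to controlling the groups $D_v$.

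First I would establish $D_v\neq 0$ for every $v$, which yields the finiteness statement. For $v$ lying above $p$ the prime is (eventually) totally ramified in the cyclotomic $\Zp$-extension, so its inertia group, and a fortiori $D_v$, is open. For $v\nmid p$ the extension $\Kc/K$ is unramified at $v$, so $D_v$ is topologically generated by the Frobenius $\phi_v$, which acts on $\mu_{p^\infty}$ by $\zeta\mapsto\zeta^{Nv}$, where $Nv$ is the cardinality of the residue field at $v$. Since $\Kc=K(\mu_{p^\infty})$, the element $\phi_v$ is trivial precisely when $Nv\equiv 1\pmod{p^n}$ for all $n$, i.e.\ when $Nv=1$ in $\Zp$; as $Nv\ge 2$ this is impossible, so $\phi_v\neq 1$ and $D_v=p^{n_v}\Zp$ is open of index $p^{n_v}$. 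In particular every prime $v\nmid p$ has exactly the odd number $p^{n_v}$ of primes of $\Kc$ above it.

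For the parity statement I would combine this with Proposition \ref{semstab}. By that proposition every bad prime of $E/K$ has split multiplicative reduction, and by Assumption (II) such primes do not lie above $p$; conversely split multiplicative reduction is preserved under field extension, while good reduction descends, so the split multiplicative primes of $\Kc$ are exactly the primes of $\Kc$ lying above the split multiplicative primes of $K$. As $K$ has only finitely many bad primes, each with finitely many primes above it, the quantity $s_{E/\Kc}$ is finite, and since each relevant $v$ contributes $p^{n_v}$ primes I obtain
\[
s_{E/\Kc}=\sum_{\substack{v \text{ of } K \\ \text{split mult.}}} p^{n_v} \equiv \sum_{\substack{v \text{ of } K \\ \text{split mult.}}} 1 = s_{E/K} \pmod 2,
\]
using that $p$ is odd.

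The one delicate point is the non-triviality of the decomposition groups, that is, ruling out that a bad prime of $K$ splits completely in $\Kc$. This is where it is essential that $\Kc$ is the \emph{cyclotomic} $\Zp$-extension: Frobenius is then detected purely by its action on $p$-power roots of unity, and the trivial estimate $Nv\neq 1$ suffices. Everything else is formal once the indices $[\Gamma:D_v]$ are known to be finite powers of the odd prime $p$.
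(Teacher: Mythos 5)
Your proof is correct and follows essentially the same route as the paper: identify $\Gal(\Kc/K)\cong\Zp$, note that decomposition groups are closed subgroups of finite (hence odd, $p$-power) index, and conclude that each split multiplicative prime of $K$ contributes an odd number of primes to $s_{E/\Kc}$. The only difference is one of detail: the paper simply asserts that the decomposition subgroups have finite index, whereas you justify this non-triviality explicitly (via eventual total ramification above $p$ and the Frobenius action $\zeta\mapsto\zeta^{Nv}$ on $\mu_{p^\infty}$ for $v\nmid p$), which is a worthwhile elaboration of the same idea rather than a different argument.
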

\begin{proof}
$\Gal(\Kc/K) \cong \Zp$ since $\mu_p \subset K$. Then the decomposition subgroup of each prime has finite index, which must be a power of $p$.
Since $p$ is odd, each primes in $K$ corresponds to an odd number of primes in $\Kc$.

\end{proof}

\begin{prop}
\label{ppar}
The $p$-parity conjecture applies for $E/K$ i.e.\  
$(-1)^{\rk_p^{\mathrm Sel} E/K} = w(E/K)$
\end{prop}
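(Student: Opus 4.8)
The plan is to recognise the statement as a known instance of the $p$-parity conjecture and to deduce it from the case of $E/\QQ$, which is a theorem. First I would record that $E/\QQ$ has good ordinary reduction at $p \geq 5$, so the $p$-parity conjecture $(-1)^{\rk_p^{\mathrm{Sel}} E/\QQ} = w(E/\QQ)$ holds by the combined work of Kato, Nekov\'a\v{r} and Dokchitser--Dokchitser. The remaining task is to transport this equality up to the finite Galois extension $K/\QQ$.

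The mechanism is the compatibility of both sides with induction along $G = \Gal(K/\QQ)$. Writing the regular representation of $G$ as $\bigoplus_\rho (\dim \rho)\,\rho$ over the irreducible Artin representations $\rho$, inductivity of $\varepsilon$-factors gives $w(E/K) = \prod_\rho w(E/\QQ,\rho)^{\dim \rho}$, and the $p^\infty$-Selmer corank decomposes compatibly into its $\rho$-isotypic multiplicities. Since the regular representation is orthogonally self-dual, only the self-dual $\rho$ influence the parity, and the Dokchitser--Dokchitser regulator-constant formalism compares the $\rho$-twisted Selmer parity with $w(E/\QQ,\rho)$ up to a rational square. This reduces the claim to $p$-parity for the self-dual twists of $E/\QQ$, which is exactly what that machinery establishes. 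Here the reductions of Propositions \ref{ptorzio}--\ref{semstab} are what guarantee that $E/K$ still has good ordinary reduction above $p$ and only split multiplicative reduction elsewhere, so the hypotheses of the cited parity theorems hold at the intermediate fields involved.

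The main obstacle is that $\Gal(K/\QQ)$ is in general non-solvable --- generically it surjects onto $\Gal(\QQd/\QQ) \cong \GL_2(\Fp)$, which is non-solvable for $p \geq 5$ --- and that $K$ is totally imaginary rather than totally real, so neither solvable base change nor the most classical totally-real form of Nekov\'a\v{r}'s theorem applies directly. The delicate point is therefore to ensure that every self-dual Artin representation of $G$ appearing in the regular representation is covered by an unconditional $p$-parity result; the regulator-constant method is precisely the tool that handles these orthogonal and symplectic twists and closes the gap. In the write-up I would simply cite the resulting general $p$-parity theorem for elliptic curves over number fields with good ordinary reduction at $p$, applied to $E/K$.
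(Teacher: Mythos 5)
There is a genuine gap at the final step of your argument. You reduce the statement to ``$p$-parity for all self-dual twists of $E/\QQ$'' and then assert that the Dokchitser--Dokchitser regulator-constant formalism ``closes the gap'', so that one may ``simply cite the resulting general $p$-parity theorem for elliptic curves over number fields with good ordinary reduction at $p$''. No such general theorem exists (neither at the time of this paper nor since): the $p$-parity conjecture over an arbitrary number field is open. The regulator-constant method only controls twists by representations occurring in Brauer relations among subquotients of $\Gal(K/\QQ)$, and for a non-solvable group this does not exhaust the self-dual Artin representations appearing in the regular representation. You correctly identify that $\Gal(K/\QQ)$ is generically non-solvable here (indeed, since $\Gal(\Kcc/K)$ is pro-$p$, in the generic case $K \supseteq \QQd$ and $\Gal(K/\QQ)$ surjects onto $\GL_2(\Fp)$), but that is precisely the case your method cannot handle, so the obstacle you name is never actually overcome.

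The paper's proof avoids induction from $\QQ$ entirely, and the ingredient it uses is the one your write-up overlooks: by Proposition \ref{ptorzio}, the pro-$p$ hypothesis forces $\Gal(K(E[p])/K)$ to have order dividing $p$, hence to fix a line in $E[p]$, so $E$ acquires a $K$-rational subgroup of order $p$ and therefore a $K$-rational $p$-isogeny. One then applies Theorem 2 of \cite{dokisog}, which proves the $p$-parity conjecture for elliptic curves with a rational $p$-isogeny \emph{over an arbitrary number field} --- exactly the hypothesis created by the pro-$p$ assumption, and with the reduction conditions (good ordinary above $p$, split multiplicative elsewhere, from Assumption (II) and Proposition \ref{semstab}) available to verify its hypotheses. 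In short: you cite Propositions \ref{ptorzio}--\ref{semstab} only to check reduction types, whereas their real role is to produce the isogeny that makes an unconditional theorem applicable directly over $K$; without that, your inductive strategy cannot be completed.
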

\begin{proof}
From Proposition \ref{ptorzio} we have a $p$-torsion subgroup in $E/K$. 
There is a $K$-rational isogeny having this subgroup as kernel.
Then we can apply Theorem 2 from \cite{dokisog}.
\end{proof}

Substituting part (b) from Proposition \ref{rootnum} for the right side of Proposition \ref{ppar}, then using Propositions \ref{sparitas}, we have
$$(-1)^{\rk_p^{\mathrm Sel} E/K}=w(E/K)=(-1)^{[K \colon \QQ]/2} (-1)^{s_{E/K}}$$
$$\rk_p^{\mathrm Sel} E/K + s_{E/K} \equiv {[K \colon \QQ]/2} \pmod 2$$
$$\rk_p^{\mathrm Sel} E/K + s_{E/\Kc} \equiv {[K \colon \QQ]}/{2} \pmod 2.$$
\cite[Proposition 3.10]{CIME} states that $\rk_p^{\mathrm Sel} E/K \equiv \lambda \pmod{2}.$
This proves Theorem \ref{taupar}.

\section{The parity of $[K \colon \QQ]/2$}

Our goal in this section is to classify the elliptic curves $E$ where $[K \colon \QQ]/2$
is odd. This is mostly based on classical results of Mazur and Serre \cite{mazur,serre}. In fact, we roughly follow Serre's argument while also paying attention to parity of various subgroups.
We retain Assumptions (I)-(III). Recall also that $E$ is still assumed to be a non-CM curve defined over $\QQ$.

Note that we assumed in the beginning that $K$ is minimal among fields satisfying Assumption (III). The parity of $\tau$ for other fields in the tower is the same (Proposition \ref{taunov}).

\subsection{Inertia}

In this section, denote $\Gal(\QQd/\QQ)$ by $G$ and $\Gal(K/\QQ)$ by $G_0$.
Since it acts faithfully on $E[p] \cong \Fp \times \Fp$, $G$ is identified with a subgroup of $GL_2(\Fp)$.
For a prime $q \in \QQ$, let $D_q$ denote its decomposition subgroup within $G_0$, 
and let $I_q$ denote its subgroup of inertia within $D_q$. 
(Note that $D_q$ and $I_q$ are, in general, defined only up to conjugacy in $\Gal(K/\QQ)$. 
However, they are unique if the extension is Abelian, which turns out to be the most interesting case.)

Recall that $q$ splits into $[G_0  \colon  D_q]$ distinct prime ideals, and has ramification degree $|I_q|$.
$I_q$ is also a normal subgroup of $D_q$ with a cyclic quotient (isomorphic to the Galois group of an extension of finite fields).

\begin{prop}[{Serre, \cite[Section 1.11]{serre}}]
\label{Ipmatrix}
$I_p$ is either 
\begin{enumerate}[a)]
\item conjugate to a subgroup of the form $\left(\begin{smallmatrix}
1&0\\ 0& \Fp^\times
\end{smallmatrix} \right)$ of order $p-1$. We will call these semi-Cartan subgroups.

\item a non-split Cartan subgroup (isomorphic to a cyclic group of order $p^2-1$, corresponding to 
the action of a primitive root in $\mathbb{F}_{p^2}$ by multiplication)
\end{enumerate}
\end{prop}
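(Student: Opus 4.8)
The plan is to reproduce Serre's analysis of the tame inertia action on $E[p]$ and to read off the two possible images in $\GL_2(\Fp)$. First I would reduce everything to the tame quotient. Since $E$ has good reduction at $p$, the finite group scheme $E[p]$ extends to a finite flat group scheme over $\Zp$, namely the $p$-torsion of the abelian scheme which is the N\'eron model. The absolute ramification index of $\Qp$ equals $1$, and $1<p-1$ by Assumption (I); hence Raynaud's theorem on finite flat group schemes (equivalently, Serre's discussion of fundamental characters, \cite[\S1.11]{serre}) shows that wild inertia acts trivially and that the mod $p$ representation restricted to $I_p$ factors through tame inertia. In particular the image of $I_p$ has order prime to $p$, so it is semisimple and, over $\overline{\Fp}$, simultaneously diagonalisable; thus $I_p$ lies in a Cartan subgroup, and it only remains to decide whether that Cartan is split or non-split and to identify the diagonal characters.

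Next I would impose the determinant constraint and invoke the theory of fundamental characters. By the Weil pairing the determinant of the representation is the mod $p$ cyclotomic character, whose restriction to $I_p$ is exactly the level-one fundamental character $\theta$, of order $p-1$, because $\Qp(\mu_p)/\Qp$ is totally tamely ramified of degree $p-1$. Serre's theorem states that the two characters $\phi_1,\phi_2$ giving the action of tame inertia on $E[p]\otimes\overline{\Fp}$ are monomials in the fundamental characters of level $1$ or level $2$, with exponents bounded by the ramification index $1$ and with $\phi_1\phi_2=\theta$. This leaves precisely two alternatives. Either both $\phi_i$ are powers of $\theta$, in which case the exponent bound together with $\phi_1\phi_2=\theta$ forces $\{\phi_1,\phi_2\}=\{1,\theta\}$; then $I_p$ is diagonal over $\Fp$ with entries $1$ and $\theta$, and since $\theta$ surjects onto $\Fp^\times$ this is the full semi-Cartan subgroup of order $p-1$, giving case a). Or $\phi_1,\phi_2$ are the two Frobenius-conjugate level-two fundamental characters $\psi,\psi^{p}$, whose product is again $\theta$; here $I_p$ is the image of $\mathbb{F}_{p^2}^\times$ acting on $E[p]\cong\mathbb{F}_{p^2}$, a cyclic non-split Cartan subgroup of order $p^2-1$, giving case b).

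Finally I would match the two cases to the reduction type: the level-one case is ordinary reduction, where the connected-\'etale sequence of $E[p]$ over $\Zp$ realises the characters $1$ and $\theta$ on the unramified quotient and on the multiplicative part $\mu_p$; the level-two case is supersingular reduction. Assumption (II) therefore selects case a), though I would record the dichotomy in the generality in which Serre proves it. The main obstacle is exactly the input from finite flat group schemes --- justifying that wild inertia is trivial and that the fundamental-character exponents lie in $[0,1]$ --- which rests on $1<p-1$; granting this, the rest is elementary linear algebra over $\Fp$ and $\mathbb{F}_{p^2}$. One further point to check is that in case b) the Cartan, although diagonalisable over $\mathbb{F}_{p^2}$, is genuinely non-split over $\Fp$, i.e.\ not $\Fp$-conjugate into the diagonal torus; this follows from $\psi\neq\psi^{p}$.
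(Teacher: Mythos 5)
The paper gives no proof of this proposition at all --- it is a direct citation of Serre \cite[Section 1.11]{serre} --- so your attempt must be judged against Serre's actual theorem, and there it has a genuine gap. The false step is the claim that good reduction at $p$ (with $e=1<p-1$) forces wild inertia to act trivially on $E[p]$. Raynaud's theorem does not say this: it bounds the fundamental-character exponents on the \emph{Jordan--H\"older constituents} of the finite flat group scheme, i.e.\ it controls only the \emph{semisimplification} of the inertia action; a finite flat representation can perfectly well be wildly ramified. Concretely, for ordinary reduction $E[p]$ over $\Zp^{\mathrm{unr}}$ is an extension of the \'etale group $\ZZ/p\ZZ$ by $\mu_p$, classified by Kummer theory by a unit $u$ (essentially the Serre--Tate parameter), and whenever $u-1$ has $p$-adic valuation exactly $1$ --- the generic case --- the splitting field $\Qp^{\mathrm{unr}}(\mu_p,u^{1/p})$ is wildly ramified, so the image of $I_p$ in $\Aut(E[p])$ is the full group $\left(\begin{smallmatrix}\Fp^{\times}&\Fp\\ 0&1\end{smallmatrix}\right)$ of order $p(p-1)$; ``finite flat'' (``peu ramifi\'ee'' in Serre's later terminology) does not mean tame. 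Serre's theorem is accordingly a \emph{trichotomy} --- image of order $p-1$ or $p(p-1)$ for ordinary reduction, $p^2-1$ for supersingular --- and your argument, which ``proves'' that the middle case never occurs, cannot be right as written. (Your identification of the diagonal characters as $\{1,\theta\}$, and your supersingular analysis, where $E[p]$ admits no nontrivial finite flat subgroup schemes over $\Zp$ so that Raynaud does apply to the whole module, are both sound.)

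What rescues the two-case statement in this paper is context that your proof never engages with: here $I_p$ is the inertia subgroup of $G_0=\Gal(K/\QQ)$, not of $\Gal(\QQd/\QQ)$, and the proposition is only ever invoked either under the hypothesis $p\nmid|G|$ (inside Proposition \ref{csakaborel}, via Proposition \ref{projklassz}), where an image of order $p(p-1)$ is impossible for trivial reasons, or in the Borel case, where $K$ lies in the fixed field of the unipotent subgroup, so that passing from $G$ to $G_0$ kills precisely the unipotent (wild) part of the inertia image and cuts an order-$p(p-1)$ group down to a semi-Cartan. This reduction is not cosmetic: for an ordinary curve with wildly ramified $E[p]$ and $\Gal(\QQd/\QQ)=\GL_2(\Fp)$ one has $G_0=G$, and the image of $I_p$ genuinely has order $p(p-1)$. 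A correct proof therefore needs two parts: (i) your fundamental-character computation applied to the semisimplification, showing that the tame quotient of the inertia image is $\{1,\theta\}$-diagonal or a full non-split Cartan, with the actual image an extension of this by a subgroup of the unipotent group; and (ii) the verification that in every situation where the paper uses the proposition, this unipotent part maps to the identity in $G_0$.
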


Case b) means $4 \mid p^2-1 \mid |G|$ so we can exclude it.

\begin{remark}
Case b) would also contradict Assumption (II) since it implies supersingular reduction at $p$.
\end{remark}

\subsection{Image in $GL_2$ and $PGL_2$}

Serre gives a classification for $\Gal(\QQd/\QQ)$, based on the following definitions. Borel and split Cartan subgroups are defined as conjugate to respectively
$$\left( \begin{matrix} \Fp^\times & \Fp \\ 0 & \Fp^\times \end{matrix} \right) \quad \mathrm{and} \quad \left( \begin{matrix} \Fp^\times & 0 \\ 0 & \Fp^\times \end{matrix} \right).$$
Non-split Cartan subgroups are as defined in Proposition \ref{Ipmatrix}.

\begin{prop}[Serre]
$G$ satisfies at least one of these:
\begin{enumerate}[a)]
\item $G=\GL_2(\Fp)$
\item $G$ is contained in a Borel subgroup
\item $G$ is contained in the normaliser of a split Cartan subgroup
\item $G$ is contained in the normaliser of non-split Cartan subgroup
\end{enumerate}
\end{prop}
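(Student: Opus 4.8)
The plan is to pass to the image $\bar G$ of $G$ under the projection $\pi \colon \GL_2(\Fp) \to \mathrm{PGL}_2(\Fp)$, classify $\bar G$, and then lift the conclusion back to $G$. Two observations drive everything. First, since $E$ is defined over $\QQ$, the Weil pairing (already noted in Section~2, giving $\bigwedge^2 E[p] = \mu_p$) shows that the determinant of the $G$-action on $E[p]$ is the mod-$p$ cyclotomic character, so $\det \colon G \to \Fp^\times$ is \emph{surjective}; I will use this to upgrade projective statements to statements about $G$ itself. Second, by Proposition~\ref{Ipmatrix} together with Assumption~(II), inertia $I_p \subseteq G$ is a semi-Cartan subgroup of order $p-1$, so $G$ contains an element $\mathrm{diag}(1,a)$ with $a$ of order $p-1$, whose image in $\mathrm{PGL}_2(\Fp)$ has order exactly $p-1$.

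First I would run the classical reducible/imprimitive/primitive trichotomy for the action of $G$ on $E[p] \cong \Fp \times \Fp$. If $G$ fixes a line, it is conjugate into the upper-triangular matrices and we are in case b). If $G$ is irreducible but preserves an unordered pair of lines $\{L_1,L_2\}$, then the subgroup fixing each $L_i$ (of index at most two, via the permutation action on the pair) lies in the Cartan stabilising both lines; hence $G$ lies in the normaliser of that Cartan, which is split when $L_1,L_2$ are both $\Fp$-rational and non-split when they are conjugate over $\mathbb{F}_{p^2}$, giving cases c) and d).

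The remaining, primitive, case is where the content lies. Here Dickson's classification of subgroups of $\mathrm{PGL}_2(\Fp)$ says that $\bar G$ either contains $\mathrm{PSL}_2(\Fp)$ or is one of the exceptional groups $A_4$, $S_4$, $A_5$. In the former case I would use that for $p \ge 5$ the group $\mathrm{SL}_2(\Fp)$ is perfect, so it has no subgroup of index $2$; since $[G,G] \subseteq \mathrm{SL}_2(\Fp)$ has image $[\bar G,\bar G] \supseteq \mathrm{PSL}_2(\Fp)$ under $\mathrm{SL}_2(\Fp) \to \mathrm{PSL}_2(\Fp)$, this forces $[G,G] = \mathrm{SL}_2(\Fp)$. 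Combined with surjectivity of $\det$ this yields $G = \GL_2(\Fp)$, i.e.\ case a).

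To eliminate the exceptional possibilities I would invoke the order-$(p-1)$ projective element produced by $I_p$: since the maximal orders of elements in $A_4$, $S_4$, $A_5$ are $3$, $4$, $5$, any exceptional $\bar G$ must satisfy $p-1 \le 5$. For $p \ge 7$ this is immediately contradictory, and for $p = 5$ it rules out $A_4$ and $A_5$ (neither has an element of order $4$, and in any case $A_5 = \mathrm{PSL}_2(\mathbb{F}_5)$ is not exceptional here). \textbf{The main obstacle} is the single borderline configuration $p = 5$ with $\bar G \cong S_4$: here $S_4$ does contain elements of order $4 = p-1$, and in fact the full preimage $\pi^{-1}(S_4) \subset \GL_2(\mathbb{F}_5)$ has surjective determinant and contains a split semi-Cartan, so the order bound alone does not exclude it. I would dispose of this last case by hand --- either through the arithmetic of the associated exceptional modular curve (verifying it carries no relevant non-CM rational points) or by a finer analysis pinning down the shape of the $G$-action on the decomposition group at $p$ together with the action of complex conjugation. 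This $p=5$ step is the one I expect to require genuine care rather than routine bookkeeping.
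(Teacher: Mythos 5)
The paper gives no proof of this proposition at all --- it is quoted from Serre --- so the comparison here is with Serre's actual classification, and your attempt in fact exposes that the paper has quoted it incompletely. Up to the last case your argument is correct and is essentially Serre's own: reducible gives case b); irreducible but preserving an unordered pair of lines (rational, or conjugate over $\mathbb{F}_{p^2}$) gives cases c) and d); in the primitive case Dickson's theorem leaves either $\bar G \supseteq \mathrm{PSL}_2(\Fp)$, which your commutator/perfectness argument plus surjectivity of $\det$ correctly upgrades to $G=\GL_2(\Fp)$, or $\bar G \in \{A_4,S_4,A_5\}$; and the projective element of order $p-1$ coming from the semi-Cartan $I_p$ of Proposition \ref{Ipmatrix} (this is where Assumptions (I)--(II) enter) eliminates the exceptional alternative for $p\ge 7$, leaving exactly the configuration $p=5$, $\bar G\cong S_4$. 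For $p\ge 7$ your proof is complete.

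The last step, however, is a genuine gap that cannot be filled, because the statement you are trying to prove is \emph{false} at $p=5$: Serre's theorem has a fifth alternative (projective image $A_4$, $S_4$ or $A_5$ --- compare Proposition \ref{projklassz}, which the paper does state correctly), and that alternative genuinely occurs under Assumptions (I)--(III). The exceptional modular curve you propose to analyse, $X_{S_4}(5)$, is a degree-$5$ cover of the $j$-line with ramification $(3,1,1)$, $(2,2,1)$, $(5)$ over $j=0,1728,\infty$, hence of genus $0$ by Riemann--Hurwitz, and its unique cusp is rational, so it is isomorphic to $\mathbb{P}^1_\QQ$ and has infinitely many rational points; concretely one may take $j(t)=t^3(t^2+5t+40)$, for which $j(t)-1728=(t^2+4t+24)^2(t-3)$. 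By Hilbert irreducibility --- or by Zywina's classification of mod-$5$ images, where this group appears under Sutherland's label 5S4 and is shown to occur --- infinitely many rational $t$ give curves $E/\QQ$ whose projective mod-$5$ image is \emph{exactly} $S_4$; such curves are automatically non-CM, since CM forces the projective image into a Cartan normaliser, of order at most $12<24$. Good \emph{ordinary} reduction at $5$ is also available: supersingular reduction is impossible for these curves (it would place a projective element of order $p+1=6$ inside $S_4$), and whenever $j(t)$ is $5$-integral with $j(t)\not\equiv 0,1728 \pmod 5$ (e.g.\ $t\equiv 1 \pmod 5$, so $j\equiv 1$) a quadratic twist --- which does not alter the projective image --- has good ordinary reduction at $5$. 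For such a curve $G$ satisfies none of a)--d): containment would force the order-$24$ group $\bar G$ to equal $\mathrm{PGL}_2(\mathbb{F}_5)$ (order $120$) or to embed into the projective image of a Borel (order $20$) or of a Cartan normaliser (orders $8$ and $12$). The same counterexamples doom your alternative route via the decomposition group at $p$ and complex conjugation: complex conjugation lands on a transposition and the decomposition group at $5$ on a cyclic group of order $4$, both of which sit comfortably inside $S_4$.

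The misquotation is harmless for the rest of the paper: Proposition \ref{simabove} only needs Serre's dichotomy that $p \mid |G|$ implies $G$ lies in a Borel or contains $\SL_2(\Fp)$ (together with $4 \mid |\SL_2(\Fp)|$), and Proposition \ref{csakaborel} disposes of the exceptional groups separately via $4 \mid |A_4|,|S_4|,|A_5|$. So the correct repair is to restate the present proposition with the exceptional case included (or to restrict it to $p\ge 7$, where your argument is a complete proof), not to attempt to prove it as written.
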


Note that it can be easily computed (and Serre does so) that in cases a), c) and d), $p$ does not divide $|G|$. Therefore
\begin{prop}
\label{simabove}
If $p \mid |G|$ but $4 \nmid |G|$ then $G$ is contained in a Borel subgroup.
\end{prop}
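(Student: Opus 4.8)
The plan is to combine Serre's classification (the immediately preceding proposition) with the divisibility constraints on $|G|$ that Serre records in cases a), c) and d). Since the statement's hypothesis is $p \mid |G|$ together with $4 \nmid |G|$, the strategy is simply to eliminate every case of the classification except b) by producing a contradiction with one of these two hypotheses.

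First I would dispose of case a). If $G = \GL_2(\Fp)$, then $|G| = (p^2-1)(p^2-p) = p(p-1)^2(p+1)$. Here $4 \mid (p-1)^2(p+1)$: indeed, since $p \ge 5$ is odd, both $p-1$ and $p+1$ are even, so their product is divisible by $4$ (in fact $8 \mid |G|$). This contradicts $4 \nmid |G|$, so case a) cannot occur. Alternatively, one may simply invoke Serre's own computation, quoted in the note preceding the statement, that $p \nmid |G|$ in case a), which directly contradicts $p \mid |G|$.

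Next I would handle cases c) and d). The note preceding the proposition asserts (attributing the computation to Serre) that in cases a), c) and d) one has $p \nmid |G|$. For c), the normaliser of a split Cartan has order $2(p-1)^2$; for d), the normaliser of a non-split Cartan has order $2(p^2-1)$; neither is divisible by $p$. In both cases the hypothesis $p \mid |G|$ is violated, ruling them out. Since the four cases of Serre's classification are exhaustive and we have eliminated a), c) and d), case b) must hold: $G$ is contained in a Borel subgroup. This is exactly the claim.

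There is essentially no obstacle here; the proposition is a bookkeeping consequence of Serre's theorem and the order computations, which is why the paper states it as an immediate corollary. The only point requiring a word of care is that one should use the correct hypothesis to exclude each case: $4 \nmid |G|$ (or equally well $p \nmid |G|$) kills a), while $p \mid |G|$ kills c) and d). I would therefore write the proof as a single short paragraph citing \cite[Section 1.11]{serre} (or the already-displayed classification proposition and the parenthetical remark that $p \nmid |G|$ in cases a), c), d)), and concluding that only the Borel case survives.
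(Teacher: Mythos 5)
Your proof is correct and takes essentially the same route as the paper: the paper gives no separate proof environment, its entire justification being the parenthetical note before the statement that the non-Borel cases of Serre's classification are excluded by order considerations, which is exactly your case elimination. One caveat, though: your ``alternative'' dismissal of case a) --- invoking the quoted claim that $p \nmid |G|$ in case a) --- does not actually work, since $|\GL_2(\Fp)| = p(p-1)^2(p+1)$ \emph{is} divisible by $p$. The paper's note is misstated on this point (it is accurate only for cases c) and d), whose groups have orders dividing $2(p-1)^2$ and $2(p^2-1)$ respectively); case a) can only be excluded by the hypothesis $4 \nmid |G|$, exactly as in your primary argument that $8 \mid (p-1)^2(p+1)$ for odd $p$. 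So your main proof is not merely equivalent to the paper's --- it silently repairs an inaccuracy in it; just delete the flawed alternative and keep the division of labour you already identified: $4 \nmid |G|$ kills a), and $p \mid |G|$ kills c) and d).
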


\begin{prop}[{Serre \cite[Section 2.6]{serre}}]
\label{projklassz}
Suppose that $p \nmid |G|$ for a group $G < GL_2(\Fp)$.
Let $H$ be the quotient of $G$ by the center of $GL_2(\Fp)$.
Then $H$, lying in $PGL_2(\Fp)$, satisfies at least one of these:
\begin{enumerate}[i)]
\item $H$ is cyclic. Then $G$ is in a Cartan subgroup of $GL_2(\Fp)$.
\item $H$ is dihedral, containing a cyclic subgroup $C$ of index $2$. $C$ is contained in a unique Cartan subgroup of $PGL_2(\Fp)$ normalised by $H$. Then $G$ is in the normaliser of a Cartan subgroup.
\item $H$ is isomorphic to $A_4$, $S_4$ or $A_5$.
\end{enumerate}
\end{prop}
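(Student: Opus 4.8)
The plan is to prove this as the classical Dickson--Klein classification of finite subgroups of $PGL_2$ of order prime to the characteristic, together with an identification of the linear preimages. The starting observation is that $p \nmid |G|$ forces every element of $G$ to be semisimple: an element of $\GL_2(\Fp)$ of order prime to $p$ is diagonalisable over $\overline{\Fp}$, so it is either central or has a well-defined unordered pair of eigenlines, and therefore lies in a unique Cartan subgroup (split if the eigenlines are $\Fp$-rational, non-split otherwise). Passing to $PGL_2(\Fp)$, the image of any Cartan subgroup is cyclic, so every nontrivial element of $H$ lies in a cyclic subgroup, and in fact in a \emph{unique} maximal cyclic subgroup, namely the image of the unique Cartan containing any of its lifts.

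The two structural facts I would extract from this are: (TI) two distinct maximal cyclic subgroups of $H$ meet only in the identity, since a common nontrivial element would lie in two distinct Cartans; and (normaliser) for a maximal cyclic $\bar C \le H$ one has $C_H(\bar C) = \bar C$ (abelian subgroups of $H$ are cyclic, being images of simultaneously diagonalisable groups, and $\bar C$ is maximal such), while $N_H(\bar C)/\bar C$ embeds into the Weyl group $N(\mathrm{Cartan})/\mathrm{Cartan} \cong \ZZ/2$ of $PGL_2(\Fp)$. Thus each $\bar C$ is a self-centralising trivial-intersection subgroup whose normaliser exceeds it by a factor $e \in \{1,2\}$.

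With this in hand the classification follows from a counting argument. Let $|H| = N$ and let $\bar C_1, \dots, \bar C_r$ represent the conjugacy classes of maximal cyclic subgroups, with $|\bar C_i| = n_i$ and $|N_H(\bar C_i) : \bar C_i| = e_i \in \{1,2\}$. Each class contributes $N/(n_i e_i)$ conjugates, pairwise meeting trivially and accounting for $n_i - 1$ nontrivial elements apiece, and by the TI property every nontrivial element of $H$ is counted exactly once. This gives
\[
1 - \frac1N \;=\; \sum_{i=1}^{r} \frac{1}{e_i}\Bigl(1 - \frac1{n_i}\Bigr).
\]
The left side is $< 1$ while each summand is $\ge 1/4$, so $r \le 3$; solving this Diophantine constraint (exactly as in the classification of the finite rotation groups) leaves only the solutions corresponding to $H$ cyclic, dihedral, $A_4$, $S_4$ or $A_5$.

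It remains to read off the Cartan structure in cases (i) and (ii). If $H$ is cyclic then $G$ is abelian (a central extension of a cyclic group), consists of semisimple elements, hence is simultaneously diagonalisable over $\overline{\Fp}$ and lies in a single Cartan subgroup of $\GL_2(\Fp)$. If $H$ is dihedral with cyclic subgroup $C$ of index $2$, the preimage of $C$ in $G$ is abelian and so lies in a Cartan $\mathcal C$; since $C$ determines $\bar{\mathcal C}$ uniquely (by TI), and $\mathcal C$ is the full preimage of $\bar{\mathcal C}$, any element of $H$ normalising $C$ normalises $\bar{\mathcal C}$, so its lifts normalise $\mathcal C$; hence $G \subseteq N(\mathcal C)$ and $\mathcal C$ is the unique such Cartan. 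I expect the main obstacle to be the structural step: verifying cleanly that distinct maximal cyclic subgroups intersect trivially and are self-centralising with Weyl-type normalisers, since this is exactly where coprimality to $p$ (semisimplicity, uniqueness of eigenlines) must be used carefully; the subsequent counting and the lifting are then routine.
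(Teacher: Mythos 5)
The paper offers no proof of this proposition at all --- it is quoted directly from \cite[Section 2.6]{serre} --- so your attempt must be measured against the classical Dickson--Klein argument, which is exactly the route you chose. Your core structural analysis is sound: coprimality to $p$ gives semisimplicity; a non-central semisimple element lies in a unique Cartan subgroup (which contains the scalars, so the Cartan attached to a nontrivial element of $H$ is well defined); hence the maximal cyclic subgroups of $H$ are precisely the subgroups $\bar T\cap H$ and intersect pairwise trivially, and the counting identity with the bound $r\le 3$ follows. The lifting arguments in cases i) and ii) are also correct: $G/(G\cap Z)$ cyclic forces $G$ abelian, and the centraliser of a non-central semisimple element is exactly its Cartan.

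There are, however, two genuine problems. First, your claim $C_H(\bar C)=\bar C$, justified by ``abelian subgroups of $H$ are cyclic'', is false: the images of $\mathrm{diag}(1,-1)$ and $\left(\begin{smallmatrix}0&1\\1&0\end{smallmatrix}\right)$ in $PGL_2(\Fp)$ generate a Klein four-group (these lifts anticommute, so an abelian subgroup of $H$ need not be the image of an abelian subgroup of $\GL_2(\Fp)$); each of its three order-$2$ subgroups is maximal cyclic yet centralised by the whole group. This does not sink the proof, because the counting only needs $e_i=[N_H(\bar C_i):\bar C_i]\le 2$, which is true --- but your one-line justification of that bound (``embeds into the Weyl group'') silently assumes that $N_H(\bar C)$ normalises the ambient $\bar T$, and this itself requires the uniqueness-of-Cartan argument applied to a generator of $\bar C$, together with $\bar T\cap H=\bar C$ by maximality; as written the step is missing. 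Second, and more seriously, ``solving this Diophantine constraint exactly as in the classification of the finite rotation groups'' conceals the real work. Your equation is not Klein's: it is weighted by the $e_i$ and indexed by conjugacy classes of maximal cyclic subgroups, so for instance $A_4$ occurs as an $r=2$ solution, not as the type $(2,3,3)$. Moreover its numerical solutions include spurious types realised by no group (e.g.\ $r=3$, all $e_i=2$, $n_i=(2,3,3)$, $N=12$ would force two conjugacy classes of Sylow $3$-subgroups, contradicting Sylow's theorem), and for the surviving types one must still prove that a group of order $24$ with TI data $(2,3,4)$ is $S_4$ and a group of order $60$ with data $(2,3,5)$ is $A_5$. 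This identification step needs genuine group-theoretic input --- or Klein's original geometric method, counting fixed points of the action on $\mathbb{P}^1(\overline{\mathbb{F}_p})$, where the orbit structure identifies the group --- and it is entirely absent from your sketch.
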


\begin{prop}
\label{csakaborel}
Suppose that $4 \nmid |G|$. Then $G$ lies in a Borel subgroup.
\end{prop}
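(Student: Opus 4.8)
The plan is to remove the hypothesis $p \mid |G|$ from Proposition \ref{simabove} by handling the complementary case $p \nmid |G|$ through the projective classification of Proposition \ref{projklassz}. Two facts about the inertia group $I_p \subseteq G$ of Proposition \ref{Ipmatrix} drive everything. Since Case b) there is excluded, $I_p$ is semi-Cartan, hence conjugate to $\{\mathrm{diag}(1,a) : a \in \Fp^\times\}$ and of order $p-1$; in particular $(p-1) \mid |G|$. Because $p \ge 5$, the group $\Fp^\times$ has order $p - 1 \ge 4$, so $I_p$ contains an element $g = \mathrm{diag}(1,a)$ with $a \notin \{1,-1\}$: this $g$ is non-scalar, fixes the two coordinate lines, and has nonzero trace $1+a$. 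I will use $g$ to rule out the non-split cases and use the even order $p-1$ of $I_p$ to rule out the dihedral case.

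If $p \mid |G|$, Proposition \ref{simabove} gives the conclusion directly, so assume $p \nmid |G|$ and let $H \le \mathrm{PGL}_2(\Fp)$ be the image of $G$, with $Z$ the scalar subgroup. The exceptional shapes $H \cong A_4, S_4, A_5$ are impossible, since each has order divisible by $4$ and $|H| = |G|/|G \cap Z|$ divides $|G|$, contradicting $4 \nmid |G|$. If $G$ lies in a non-split Cartan $C$ or its normaliser $N(C)$, then $g \in N(C)$; but a non-scalar element of $N(C)$ either lies in $C$, where it fixes no $\Fp$-rational line, or lies in $N(C) \setminus C$, where a short computation gives trace $0$. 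Since $g$ is non-scalar, fixes a line, and has nonzero trace, this is a contradiction, so the non-split Cartan possibilities are out. If $G$ lies in a split Cartan, we are done, as a split Cartan is contained in the diagonal Borel.

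The remaining, and genuinely delicate, case is that $G \subseteq N(C)$ with $C$ a split Cartan but $G \not\subseteq C$, so that $H$ is honestly dihedral, say $H = D_m$ with cyclic rotation subgroup $C'$ of order $m$ and the reflections supplied by the antidiagonal coset of $N(C)$. Here I would first note that $I_p$, being cyclic of order $p - 1 \ge 4$ and meeting $Z$ trivially, injects into $\mathrm{PGL}_2(\Fp)$; since every antidiagonal element has order at most $2$ modulo $Z$, the subgroup $I_p$ cannot be generated by one and so lies inside $C$ itself, with image in the rotation subgroup $C'$. This gives $(p-1) \mid m$. On the other hand $|G| = |H| \cdot |G \cap Z| = 2m \cdot |G \cap Z|$, so $4 \nmid |G|$ forces $m$ to be odd. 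As $p-1$ is even, $(p-1) \mid m$ is then impossible, and this last case cannot occur. The main obstacle is exactly this step: the non-split cases fall out cleanly from the trace and line-fixing properties of $g$, but ruling out the split-Cartan normaliser requires the parity clash between the even order $p-1$ of the inertia image in $\mathrm{PGL}_2(\Fp)$ and the odd rotation number forced by $4 \nmid |G|$.
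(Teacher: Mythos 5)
Your proof is correct, and its skeleton matches the paper's: reduce to $p \nmid |G|$ via Proposition \ref{simabove}, run through the cases of Proposition \ref{projklassz}, dispose of case iii) because $|A_4|$, $|S_4|$, $|A_5|$ are all divisible by $4$, and extract a parity contradiction in the dihedral case from the even order of the semi-Cartan inertia group $I_p$. Where you genuinely diverge is in how $I_p$ is located relative to the Cartan subgroup. The paper cites Serre's Proposition 14 to assert that the Cartan subgroup normalised by $G$ contains $I_p$, and then only needs the single element $\left(\begin{smallmatrix}1&0\\0&-1\end{smallmatrix}\right) \in I_p$: its image is an order-$2$ element of the cyclic index-$2$ subgroup of $H$, forcing $4 \mid |H| \mid |G|$ uniformly for split and non-split normalisers. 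You avoid that citation entirely and replace it with two self-contained arguments: first, the element $g=\mathrm{diag}(1,a)$, $a \neq \pm 1$ (which exists since $p-1 \ge 4$) is non-scalar, fixes rational lines and has nonzero trace, which is incompatible with lying in a non-split Cartan $C$ (no rational eigenlines) or in $N(C)\setminus C$ (trace $0$), so the non-split possibilities die immediately; second, in the split-normaliser case, the generator of $I_p$ has image of order $p-1 \ge 4$ in $\mathrm{PGL}_2(\Fp)$ while elements of $N(C)\setminus C$ have image of order at most $2$, so $I_p \subseteq C$ and its even order $p-1$ divides the rotation number $m$, clashing with the oddness of $m$ forced by $4 \nmid |G| = 2m\,|G\cap Z|$. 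The trade-off is clear: the paper's route is shorter because Serre's Proposition 14 does the structural work; yours is elementary and self-contained, and makes visible exactly why a semi-Cartan of order at least $4$ can only sit inside the Cartan itself, and only a split one. Both proofs end with the same parity mechanism, yours using the full even order of the image of $I_p$ where the paper needs only one order-$2$ element.
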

\begin{proof}
Using Proposition \ref{simabove} we can assume that $p \nmid |G|$.
Then we look at the cases in Proposition \ref{projklassz}.

In case i), the Cartan subgroup containing $G$ is either split or non-split.
If it is split, then it is contained in a Borel subgroup. 
Otherwise $I_p$ must have been a nonsplit Cartan subgroup, which leads to $4 \mid |G|$.

In case ii), by \cite[Proposition 14]{serre} the Cartan subgroup normalised by $G$ contains the semi-Cartan subgroup $I_p$ (see Proposition \ref{Ipmatrix}). We use the basis where $I_p$ is $\left(\begin{smallmatrix}
1&0\\ 0& \Fp^\times
\end{smallmatrix} \right)$. Then the projection of $\left(\begin{smallmatrix}
1&0\\ 0& -1
\end{smallmatrix} \right)$ is in the Cartan subgroup normalised by $H$, thus in the index $2$ cyclic subgroup of $H$. Since it has order $2$, the index $2$ cyclic subgroup of $H$
has even order hence $4 \mid |H| \mid |G|$.

In case iii), it is enough to note that $|A_4|$, $|S_4|$ and $|A_5|$ are all multiples of 4.
\end{proof}

\subsection{Restrictions on $p$}

Whether $G$ is contained in a Borel subgroup is equivalent to whether $E/\QQ$ has an isogeny of degree $p$ to some elliptic curve $E'$.

Mazur's results \cite{mazur} show that a non-CM curve $E/\QQ$ can only have isogenies with prime degree for
$$p \in \{2, 3, 5, 7, 11, 13, 17, 37\}$$

We exclude further primes with the following simple observation.
\begin{prop}
\label{nkp1}
Assume in addition to (I)-(III) that $p \equiv 1 \pmod{4}$. Then
$$[K \colon \QQ]/2 \equiv 0 \pmod{2}$$
\end{prop}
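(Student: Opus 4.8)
The plan is to reduce the statement to a single divisibility fact: that $\QQ(\mu_p) \subseteq K$, whence $(p-1) \mid [K \colon \QQ]$ and the congruence $[K\colon\QQ]/2 \equiv 0 \pmod 2$ follows at once when $4 \mid (p-1)$. So the real content is showing that Assumption (III) forces the full cyclotomic field $\QQ(\mu_p)$ into $K$, not merely into $\Kc = K(\mu_{p^\infty})$.

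First I would prove $\mu_p \subset K$. By the Weil pairing $\bigwedge^2 E[p] = \mu_p$, we have $\mu_p \subset \QQd \subseteq \Kcc$, so $K(\mu_p)$ is an abelian subextension of $\Kcc/K$. Hence $\Gal(K(\mu_p)/K)$ is a quotient of $\Gal(\Kcc/K)$, which is pro-$p$ by Assumption (III), and is therefore a finite $p$-group. On the other hand, restriction to $\QQ(\mu_p)$ gives an injection
$$\Gal(K(\mu_p)/K) \hookrightarrow \Gal(\QQ(\mu_p)/\QQ) \cong \Fp^\times,$$
a cyclic group of order $p-1$, which is coprime to $p$. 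A $p$-group that injects into a group of order prime to $p$ must be trivial, so $K(\mu_p) = K$, i.e.\ $\mu_p \subset K$.

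Granting this, $\QQ(\mu_p) \subseteq K$, and since $[\QQ(\mu_p) \colon \QQ] = p-1$ we obtain $(p-1) \mid [K \colon \QQ]$. If $p \equiv 1 \pmod 4$ then $4 \mid (p-1)$, hence $4 \mid [K \colon \QQ]$, which is precisely $[K \colon \QQ]/2 \equiv 0 \pmod 2$. The argument is just a divisibility count, so there is no serious obstacle; the one step requiring care is the first, where one must exploit that $\Gal(\Kcc/K)$ is pro-$p$ to push $\mu_p$ all the way down into $K$. Note that minimality of $K$ is not used here, and that this is consistent with Proposition \ref{rootnum}, which already invokes $\mu_p \subset K$ to conclude that $K$ is totally imaginary and $[K\colon\QQ]$ even.
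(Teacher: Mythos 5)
Your proof is correct and follows the same route as the paper: the paper's proof is the one-liner ``from the Weil pairing, $K \supseteq \QQ(\mu_p)$, so $4 \mid [\QQ(\mu_p)\colon\QQ] \mid [K\colon\QQ]$''. Your only addition is to spell out why Assumption (III) pushes $\mu_p$ down into $K$ (a $p$-group quotient injecting into $\Fp^\times$ must be trivial), which the paper leaves implicit here and uses without comment elsewhere (e.g.\ in Propositions \ref{rootnum} and \ref{sparitas}).
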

\begin{proof}
From the Weil pairing, $K \ge \QQ(\mu_p)$ so $4 \mid [\QQ(\mu_p) \colon \QQ] \mid [K \colon \QQ]$.
\end{proof}
 
With this and Assumption (I), we can exclude all primes but $7$ and $11$. 

\subsection{Inertia in the Borel case}

Whether $G$ is contained in a Borel subgroup is equivalent to whether $E/\QQ$ has an isogeny of degree $p$.
Borel subgroups can be written over a suitable basis as
$$\left( \begin{matrix} \Fp^\times & \Fp \\ 0 & \Fp^\times \end{matrix} \right)$$

We work in this basis from now on. Note that the Borel subgroup contains the unipotent subgroup (with $1$s in the diagonal)
as a normal subgroup of order $p$.

Recall that we chose $K$ to be the minimal field over which $K_\infty$ is a pro-$p$ extension. Therefore $K$ is contained in the fixed field of the unipotent subgroup of $K$. Then elements of $G_0$ (understood as cosets in $G$)
will be written as $$\left( \begin{matrix} a & \Fp \\ 0 & b \end{matrix} \right)$$
for $a,b \in \Fp^\times$. Note that by some abuse of terminology these have well-defined trace and determinant.

$G_0$ is isomorphic to a subgroup of $\Fp^\times \times \Fp^\times$ and is therefore Abelian. For a rational prime $q$, let $I_q$ be the inertia subgroup of $\Gal(K/\QQ)$ at $q$. 

The isomorphism $$\bigwedge\nolimits^2 E[p] \cong \mu_p$$ implies that the action of $\Gal(K/\QQ)$ on $\mu_p$ is given by the determinant on $G_0$. The kernel of $\det$ is $\Gal(K/\QQ(\mu_p))$.

Since $\QQ(\mu_p) \subset \QQ(K)$, $\mathrm{det}$ is surjective to $\Fp^\times$. Moreover, $\det \colon  I_p \to \Fp^\times$ is a bijection since both have $p-1$ elements (Prop. \ref{Ipmatrix}).

Therefore $\det \colon  G_0 \to \Fp^\times$ belongs to split exact sequence i.e.
$$G_0 \cong \Gal(K/\QQ(\mu_p)) \times I_p.$$

\begin{prop}
\label{paratlaninercia}
$2 \nmid [K \colon \QQ]/2$ is equivalent to $p \equiv 3 \pmod 4$ and $2 \nmid |I_q|$ for all rational primes $q \neq p$.
\end{prop}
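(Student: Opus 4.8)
The plan is to translate the divisibility condition into a statement about the $2$-part of the abelian group $G_0 = \Gal(K/\QQ)$, and then read it off from the decomposition $G_0 \cong \Gal(K/\QQ(\mu_p)) \times I_p$ established above. Write $A_2$ for the $2$-Sylow subgroup of a finite abelian group $A$. Since $[K \colon \QQ] = |G_0|$, the condition $2 \nmid [K\colon\QQ]/2$ is equivalent to $|(G_0)_2| = 2$. Because the determinant restricts to an isomorphism $I_p \xrightarrow{\sim} \Fp^\times$, the factor $I_p$ is cyclic of order $p-1$, so $(I_p)_2$ has order $2^{v_2(p-1)}$; this equals $2$ precisely when $p \equiv 3 \pmod 4$, and is at least $4$ when $p \equiv 1 \pmod 4$.

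The key input is to identify the complementary factor $\Gal(K/\QQ(\mu_p))$ with the inertia away from $p$. For any $q \neq p$ the extension $\QQ(\mu_p)/\QQ$ is unramified at $q$, so the cyclotomic character (which is $\det$ on $G_0$) is trivial on $I_q$; hence $I_q \subseteq \ker(\det) = \Gal(K/\QQ(\mu_p))$. Conversely, let $N = \langle I_q : q \neq p\rangle$ and consider its fixed field $K^N$. By construction $K^N/\QQ$ is unramified at every prime $q \neq p$, it is abelian, and its degree divides $|G_0|$, which is prime to $p$ since $G_0$ embeds in $\Fp^\times \times \Fp^\times$. A finite abelian extension of $\QQ$ that is unramified outside $p$ and of degree prime to $p$ is contained in $\QQ(\mu_p)$ (by Kronecker--Weber together with the ramification behaviour of cyclotomic fields; equivalently, $\QQ$ has no nontrivial unramified abelian extension). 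Thus $K^N \subseteq \QQ(\mu_p)$, which forces $\Gal(K/\QQ(\mu_p)) \subseteq N$, and combined with the reverse inclusion gives
$$\Gal(K/\QQ(\mu_p)) = \langle I_q : q \neq p\rangle.$$

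Taking $2$-parts, since $G_0$ is abelian the $2$-Sylow subgroup splits as
$$(G_0)_2 = \bigl(\Gal(K/\QQ(\mu_p))\bigr)_2 \times (I_p)_2,$$
and $\bigl(\Gal(K/\QQ(\mu_p))\bigr)_2$ is generated by the $(I_q)_2$ for $q \neq p$. Consequently $|(G_0)_2| = 2$ holds if and only if both factors are forced to their minimum: $(I_p)_2$ has order $2$, i.e.\ $p \equiv 3 \pmod 4$, and $\langle (I_q)_2 : q \neq p\rangle$ is trivial, i.e.\ each $(I_q)_2 = 1$, i.e.\ $|I_q|$ is odd for every $q \neq p$. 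This is precisely the asserted equivalence.

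The only nontrivial step is the identification of $\Gal(K/\QQ(\mu_p))$ with the inertia away from $p$; everything else is bookkeeping with $2$-Sylow subgroups of an abelian group. That identification rests on the absence of unramified abelian extensions of $\QQ$ and on the fact that an abelian extension unramified outside $p$ of degree prime to $p$ must sit inside $\QQ(\mu_p)$ --- both standard, so I do not expect a genuine obstacle, only the need to state these inputs carefully.
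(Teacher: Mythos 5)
Your proof is correct and follows essentially the same route as the paper: both arguments rest on the splitting $G_0 \cong \Gal(K/\QQ(\mu_p)) \times I_p$ together with the identification of $\Gal(K/\QQ(\mu_p))$ as the subgroup generated by the inertia groups $I_q$ for $q \neq p$, the inclusion $I_q \subseteq \Gal(K/\QQ(\mu_p))$ coming in both cases from the fact that $\QQ(\mu_p)/\QQ$ is unramified at $q$. The only real difference is how the generation statement is justified --- the paper observes that otherwise $\QQ$ would admit a nontrivial everywhere-unramified extension (Minkowski's theorem, applied after throwing $I_p$ back in), whereas you place the fixed field $K^N$ inside $\QQ(\mu_p)$ via Kronecker--Weber and the prime-to-$p$ degree; your input is slightly heavier but equally standard, and your $2$-Sylow bookkeeping is a harmless repackaging of the paper's parity-of-exponent argument.
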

\begin{proof}
$[K \colon \QQ]=|G_0|=|I_p| \times \left| \Gal(K/\QQ(\mu_p)) \right|$.

$|I_p|=p-1$ so if $p \equiv 1 \pmod 4$ we are done, and otherwise $[K \colon \QQ]/2 \equiv \left| \Gal(K/\QQ(\mu_p)) \right| \pmod{2}$.

If for any $q \neq p$, $I_q$ is contained in $\Gal(K/\QQ(\mu_p))$ since $\QQ(\mu_p)$ is unramified at $q$. Hence $|I_q| \mid \left| \Gal(K/\QQ(\mu_p)) \right|$.

In the other direction, $I_q$ together generate all of $\Gal(K/\QQ(\mu_p))$  (otherwise $\QQ$ would have an unramified extension), so if each is odd then $\Gal(K/\QQ(\mu_p))$ has odd exponent, therefore also odd order.

\end{proof}

Note that our $I_q$ for a prime $q \neq p$ is the same as Serre's $\phi_q$ (This follows from $p \ge 5$ and {\cite[Proposition 23 (b)]{serre}}).

\begin{prop}[{Serre \cite[Section 5.6 part a)]{serre}}]
\label{minext}
Let $\QQ_q^{\mathrm{unr}}$ be a maximal unramified extension of $\QQ_q$ and suppose
$E$ has potentially good reduction at $q$.
Then $|I_q|$ is the degree of the minimal extension over $\QQ_q^{\mathrm{unr}}$ where $E$ obtains good reduction.
\end{prop}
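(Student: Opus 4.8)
The plan is to read $|I_q|$ off the criterion of N\'eron--Ogg--Shafarevich, applied over the maximal unramified extension $\QQ_q^{\mathrm{unr}}$, and then to transport the answer from the full $p$-adic Tate module down to $E[p]$. First I would pin down what $I_q$ is: by the remark following Proposition~\ref{paratlaninercia} it coincides with Serre's $\phi_q$, the image of the absolute inertia group $I = \Gal(\overline{\QQ}_q/\QQ_q^{\mathrm{unr}})$ under the mod-$p$ representation $I \to \Aut(E[p]) \cong \GL_2(\Fp)$. Alongside it I would introduce $\Phi$, the image of the same $I$ under the $p$-adic representation $I \to \Aut(T_p(E)) \cong \GL_2(\Zp)$, so that reduction modulo $p$ carries $\Phi$ onto $I_q$.

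The central step is N\'eron--Ogg--Shafarevich in its Serre--Tate form: for a finite extension $L$ of $\QQ_q^{\mathrm{unr}}$, the curve $E$ acquires good reduction over $L$ exactly when $\Gal(\overline{\QQ}_q/L) \subseteq I$ acts trivially on $T_p(E)$, that is, exactly when $\Gal(\overline{\QQ}_q/L) \subseteq \Ker(I \to \Phi)$. As the residue field of $\QQ_q^{\mathrm{unr}}$ is already algebraically closed, every such $L$ is totally ramified and is the fixed field of a closed subgroup of $I$, so the smallest good-reduction field corresponds to the largest admissible subgroup, namely $\Ker(I \to \Phi)$ itself. Hence
$$\Gal(L/\QQ_q^{\mathrm{unr}}) \cong \Phi.$$
Since potential good reduction is precisely the statement that inertia acts on $T_p(E)$ through a finite quotient, $\Phi$ is finite and $[L:\QQ_q^{\mathrm{unr}}] = |\Phi|$.

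It remains to check that reduction modulo $p$ does not shrink $\Phi$. For $p \ge 3$ the principal congruence subgroup $\Ker(\GL_2(\Zp) \to \GL_2(\Fp)) = 1 + pM_2(\Zp)$ is torsion-free: expanding $(1 + p^m C)^n$ with $C \not\equiv 0 \pmod p$ and comparing $p$-adic valuations shows that no element other than the identity can have finite order, the case $p = 2$ being the genuine exception witnessed by $-1$. Hence the finite group $\Phi$ meets this kernel trivially, injects into $\GL_2(\Fp)$, and has image precisely $I_q$, giving $|I_q| = |\Phi|$. Together with the previous display this yields $|I_q| = |\Phi| = [L : \QQ_q^{\mathrm{unr}}]$.

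The step I expect to be the real content is the middle one: one must run N\'eron--Ogg--Shafarevich over $\QQ_q^{\mathrm{unr}}$ rather than over $\QQ_q$, and identify the Galois group of the minimal good-reduction extension with the inertia image $\Phi$ on the nose, not merely bound it. Once that identification is in hand, finiteness of $\Phi$ is just the hypothesis of potential good reduction and the descent to $E[p]$ is the elementary torsion-free-kernel fact, so no further geometric input is needed.
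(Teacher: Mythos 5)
Your proposal is correct, and it supplies a proof where the paper gives none: Proposition~\ref{minext} is stated purely as a quotation of Serre (Section 5.6 a) of \cite{serre}), and the argument underlying that citation is exactly the Serre--Tate one you reconstruct. Your two steps --- the N\'eron--Ogg--Shafarevich criterion applied over $\QQ_q^{\mathrm{unr}}$, identifying the minimal good-reduction extension with the fixed field of $\Ker\bigl(I \to \Aut(T_p(E))\bigr)$, followed by the observation that $1+pM_2(\Zp)$ is torsion-free for $p \ge 3$, so the finite inertia image $\Phi$ injects into $\GL_2(\Fp)$ with image $I_q$ --- are precisely the classical route, and as a bonus they establish that a unique minimal such extension exists, which the statement of the proposition tacitly presupposes. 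Two points deserve to be made explicit in your write-up. First, everything requires $q \neq p$: the criterion for $T_p(E)$ over an extension of $\QQ_q$ is unavailable when $q = p$, and indeed the proposition is false in that case ($I_p$ has order $p-1$ by Proposition~\ref{Ipmatrix} regardless of reduction data); this restriction is implicit in the paper, since the identification of $I_q$ with Serre's $\phi_q$ in the remark preceding the proposition is only claimed for $q \neq p$, but your proof should state it. Second, the N\'eron--Ogg--Shafarevich criterion is usually formulated over complete (or Henselian) discretely valued fields, and finite extensions of $\QQ_q^{\mathrm{unr}}$ are Henselian rather than complete; invoking the Henselian version, or passing to completions, is routine but worth a sentence.
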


\begin{prop}
If $E/\QQ$ has additive, potentially multiplicative reduction at $q$, $|I_q|=2$.
\end{prop}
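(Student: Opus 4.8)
The statement concerns a curve $E/\QQ$ with additive, potentially multiplicative reduction at a prime $q$, and claims $|I_q| = 2$. Potentially multiplicative reduction means that $E$ acquires multiplicative reduction after a finite extension of $\QQ_q$, but not over $\QQ_q$ itself (since the reduction is additive, not already multiplicative). The plan is to recall the standard structure theory: a curve with potentially multiplicative reduction is a quadratic twist of a curve with (split or non-split) multiplicative reduction. The quadratic twist by some character $\chi$ of $\Gal(\bar\QQ_q/\QQ_q)$ turns multiplicative reduction into the additive reduction we are given, and the field over which $E$ becomes multiplicative (equivalently, recovers the untwisted curve) is exactly the quadratic extension cut out by $\chi$.

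First I would reduce to the setting of $\QQ_q^{\mathrm{unr}}$, since $|I_q|$ measures ramification and Proposition \ref{minext}'s philosophy (minimal extension for good reduction) has an analogue here: $|I_q|$ is the degree of the minimal extension of $\QQ_q^{\mathrm{unr}}$ over which $E$ acquires multiplicative reduction. Over $\QQ_q^{\mathrm{unr}}$ the curve is a quadratic twist of a curve with multiplicative reduction, and the twisting character is ramified (otherwise the reduction would already be multiplicative over $\QQ_q^{\mathrm{unr}}$, contradicting additivity). A ramified quadratic character of $\Gal(\bar\QQ_q^{\mathrm{unr}}/\QQ_q^{\mathrm{unr}})$ cuts out a ramified quadratic extension, so the minimal field of multiplicative reduction has degree $2$ over $\QQ_q^{\mathrm{unr}}$, giving $|I_q| = 2$.

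Concretely, I would work with the valuation of the $j$-invariant and the Tate curve description. Potentially multiplicative reduction is characterised by $\mathrm{ord}_q(j(E)) < 0$, and over the quadratic extension where $E$ becomes multiplicative, $E$ is isomorphic to a Tate curve $E_\tau$. The twist is governed by whether $-c_6$ (or the relevant invariant) is a square in $\QQ_q$; additive-but-potentially-multiplicative means precisely that the twisting quadratic extension is nontrivial, and since $q \neq p$ plays no special role here, I only need that this extension is ramified. The cleanest way to see ramification is that an unramified quadratic twist of a multiplicative curve is still multiplicative (unramified twists preserve the reduction type of semistable curves up to split/non-split), so to destroy multiplicativity and produce additive reduction the twist must be by a ramified quadratic character.

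The main obstacle I anticipate is handling the residue characteristic cleanly. For residue characteristic $2$ the classification of quadratic characters and the behaviour of twists is more delicate, and one must be careful that "ramified quadratic twist of multiplicative yields additive with $|I_q|=2$" still holds; the wild ramification at $2$ could a priori inflate $|I_q|$ beyond $2$. I would address this by invoking the general fact that for semistable-after-quadratic-twist curves the inertia acts through the twisting character, which has order exactly $2$ regardless of residue characteristic, so $I_q$ has order $2$ even in the wildly ramified case. This reduces the whole argument to the observation that the inertia action on the Tate module factors through $\Gal$ of the (degree $2$) twisting extension, which is the key structural input.
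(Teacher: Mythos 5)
Your core argument is correct and is essentially the paper's own proof. The paper's proof is a one-liner: $E$ becomes semistable over the quadratic extension $\QQ_q(\sqrt{-c_6})$, and since the reduction over $\QQ_q$ is additive (and reduction type is insensitive to unramified base change) this quadratic extension is ramified, so the minimal semistabilizing extension of $\QQ_q^{\mathrm{unr}}$ has degree exactly $2$; your first three paragraphs say precisely this in the language of quadratic twists of Tate curves.

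One correction to your final paragraph, however: the statement you single out as ``the key structural input'' --- that the inertia action on the Tate module factors through the Galois group of the degree-$2$ twisting extension --- is false as stated. For a Tate curve, inertia at $q$ acts on $T_pE$ with \emph{infinite} image (and on $E[p]$ with image of order $p$ whenever $p \nmid \ord_q(j)$), via the unipotent matrices coming from $p$-power roots of the Tate parameter; a ramified quadratic twist of a Tate curve therefore has inertia image of order $2p$ on $E[p]$, not $2$. What is true is that this unipotent part is invisible in $G_0 = \Gal(K/\QQ)$: since $K$ is chosen minimal with $\Gal(\Kcc/K)$ pro-$p$, it is the fixed field of the unipotent subgroup, so the paper's $I_q \le G_0$ records only the semisimplified inertia action, which at $q \neq p$ is the pair $(\chi,\chi)$ for the twisting character $\chi$ --- of order $2$ exactly when $\chi$ is ramified, i.e.\ when the reduction is additive. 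Your proof is not actually damaged by this, because your operative definition of $|I_q|$ (the degree of the minimal extension of $\QQ_q^{\mathrm{unr}}$ over which $E$ becomes multiplicative, which matches the paper's identification of $I_q$ with Serre's $\phi_q$) yields $|I_q|=2$ by the argument of your second paragraph alone, with no case distinction needed at $q=2$: an order-$2$ character cuts out a degree-$2$ extension whether the ramification is tame or wild.
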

\begin{proof}
$E$ becomes semistable at $q$ at the degree $2$ extension $\QQ_q(\sqrt{-c_6})$ where $c_6$ is a fixed polynomial of the coefficients of $E$
(see \cite{silverman}).
\end{proof}

Therefore, in particular, $|I_q|=1$ is equivalent to $E$ being semistable at $q$.
Serre states that since $E$ obtains good reduction at $q$ (its discrimant $\Delta$ has $q$-valuation $0 {\pmod {12}}$)
 at a field extension with Galois group $I_q$, $|I_q|v_q(\Delta) \equiv 0 \pmod {12}$, 
and if $\mathrm{gcd}(q,12)=1$ then $\mid I_q \mid=\frac{12}{\mathrm{gcd}(12,v_q(\Delta))}$.

Note that by inspecting Serre's list of possibilites in points $a_1)$, $a_2)$ and $a_3)$ of section 5.6. in \cite{serre}, the only odd possibilities for $|I_q|$ are $1$ and $3$.

Summarizing the above, we have the following.
\begin{thm}
\label{mikorparos}
Suppose that $\Gal(\QQd/\QQ)$ is in a Borel subgroup.
If it has order not divisible by $4$, the following conditions hold necessarily.
\begin{enumerate}[1)]
\item $p \equiv 3 \pmod 4$
\item For all primes $q \neq p$ where $E/\QQ$ has additive reduction, it has potentially good reduction and $4 \mid v_q(\Delta)$.
\end{enumerate}
These conditions are sufficient provided $E/\QQ$ is semistable at $2$ and $3$,
 or it is otherwise known that $|I_2|$ and $|I_3|$ are odd.
\end{thm}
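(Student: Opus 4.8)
The plan is to translate the group-theoretic hypothesis into a statement about $[K \colon \QQ]/2$ and then feed everything through Proposition \ref{paratlaninercia}. First I would record that, writing $U$ for the unipotent subgroup of the Borel, the (unique, since $U \trianglelefteq B$) $p$-Sylow of $G = \Gal(\QQd/\QQ)$ is the normal subgroup $G \cap U$, and $K$ is its fixed field, so that $G_0 = \Gal(K/\QQ) \cong G/(G \cap U)$ and $[K \colon \QQ] = |G|/|G \cap U|$ with $|G \cap U| \in \{1, p\}$. Since $p$ is odd this yields $4 \mid |G| \iff 4 \mid [K \colon \QQ]$; and because $\QQ(\mu_p) \subseteq K$ forces the even number $p-1$ to divide $[K \colon \QQ]$, the hypothesis $4 \nmid |G|$ is exactly the statement that $[K \colon \QQ]/2$ is odd. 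Proposition \ref{paratlaninercia} then says this is equivalent to $p \equiv 3 \pmod 4$ together with $|I_q|$ odd for every rational prime $q \neq p$. Condition 1) is now immediate, and the whole theorem reduces to expressing ``$|I_q|$ odd'' in terms of the reduction type of $E$ at $q$.

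For this dictionary I would argue prime by prime. If $E$ is semistable at $q$ (good or multiplicative reduction) then $|I_q| = 1$ is odd and imposes nothing. If $E$ has additive reduction at $q$ then $|I_q| \neq 1$, so oddness means $|I_q| = 3$, the only odd value exceeding $1$ in Serre's list. Additive, potentially multiplicative reduction forces $|I_q| = 2$ by the observation preceding the theorem, so oddness already rules this out and leaves potentially good reduction; this is the first half of condition 2). For the second half, Proposition \ref{minext} identifies $|I_q|$ with the degree of the minimal extension achieving good reduction, and for $q \geq 5$ Serre's formula $|I_q| = 12/\gcd(12, v_q(\Delta))$ applies. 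In a minimal model with $q \geq 5$ and additive reduction one has $1 \leq v_q(\Delta) \leq 11$, so $|I_q| = 3 \iff \gcd(12, v_q(\Delta)) = 4 \iff v_q(\Delta) \in \{4, 8\} \iff 4 \mid v_q(\Delta)$, which is precisely condition 2) for such $q$.

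Assembling these pieces gives both directions. For necessity, $4 \nmid |G|$ makes every $|I_q|$ odd, and the previous paragraph converts this into conditions 1) and 2). For sufficiency, conditions 1) and 2) secure oddness of $|I_q|$ at every $q \geq 5$ (and $p \equiv 3 \pmod 4$ handles the parity contribution at $p$), while the extra hypothesis — semistability at $2$ and $3$, whence $|I_2| = |I_3| = 1$, or the explicit assumption that $|I_2|, |I_3|$ are odd — supplies oddness at the two remaining primes; Proposition \ref{paratlaninercia} then returns $[K \colon \QQ]/2$ odd, i.e.\ $4 \nmid |G|$. The one genuinely delicate point, and the reason $2$ and $3$ are singled out, is that Serre's clean formula for $|I_q|$ requires $\gcd(q, 12) = 1$; at $q = 2, 3$ the relation between $|I_q|$ and $v_q(\Delta)$ must instead be read off the case-by-case tables of \cite[\S 5.6]{serre}, and it is exactly there that one has to check whether $4 \mid v_q(\Delta)$ still captures $|I_q| = 3$. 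I expect this table-chasing at the small primes to be the main obstacle, the rest being a transparent bookkeeping of parities.
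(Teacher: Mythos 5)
Your overall route is the paper's own: identify the hypothesis $4 \nmid |\Gal(\QQd/\QQ)|$ with oddness of $[K \colon \QQ]/2$ (your Sylow/unipotent argument is a more explicit version of what the paper does when it passes to $G_0$), then apply Proposition \ref{paratlaninercia} and translate ``$|I_q|$ odd'' into reduction data. Your dictionary for $q \ge 5$ --- semistable $\Leftrightarrow |I_q| = 1$; additive potentially multiplicative $\Rightarrow |I_q| = 2$; additive potentially good $\Rightarrow |I_q| = 12/\gcd(12, v_q(\Delta))$, which is odd precisely when $4 \mid v_q(\Delta)$ --- is exactly the paper's, and your sufficiency argument is complete as stated, because the extra hypothesis of the theorem hands you oddness of $|I_2|$ and $|I_3|$ directly, so no analysis at $2$ and $3$ is required there.

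The gap is in the necessity direction at $q \in \{2,3\}$: condition 2) asserts $4 \mid v_q(\Delta)$ for \emph{every} additive prime $q \neq p$, including $2$ and $3$, and you leave this to unspecified ``table-chasing''. That chase would not obviously succeed: Serre's lists $a_2)$ and $a_3)$ in Section 5.6 enumerate the possible orders of $I_q$ at $q = 3$ and $q = 2$ (which is how one sees that the only odd values are $1$ and $3$), but they do not by themselves relate those orders to $v_q(\Delta)$. The missing ingredient, which the paper quotes from Serre, is the congruence $|I_q| \cdot v_q(\Delta) \equiv 0 \pmod{12}$, valid for \emph{every} $q$ at which $E$ has potentially good reduction: over the minimal extension of $\QQ_q^{\mathrm{unr}}$ realizing good reduction (whose degree is $|I_q|$ by Proposition \ref{minext}), the pulled-back discriminant has valuation $|I_q| v_q(\Delta)$ and differs from the valuation-zero minimal discriminant by a twelfth power. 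With this in hand, necessity at $2$ and $3$ is immediate and needs no case analysis: potential multiplicativity is excluded since $|I_q| = 2$ is even, and if $|I_q|$ is odd then $4 \mid |I_q| v_q(\Delta)$ forces $4 \mid v_q(\Delta)$ --- you do not even need to know that $|I_q| = 3$ is the only odd possibility exceeding $1$.
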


Note that these properties can be checked quickly by a computer as long as it can factorise the discriminant.

\begin{prop}
\label{mikortorzio}

Suppose that $E$ is an elliptic curve with a $p$-isogeny and $|I_q|=1$ for all primes $q \neq p$.
Let $E'$ be the $p$-isogeny pair of $E$.
Then either $E$ or $E'$ have rational $p$-torsion.
\end{prop}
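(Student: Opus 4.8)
The plan is to recast the statement as an assertion about two mod $p$ characters and then pin those characters down using the inertia hypothesis together with good ordinary reduction at $p$. Since $E$ has a $p$-isogeny $\phi\colon E\to E'$ with kernel $C=\langle v_1\rangle$, the group $G=\Gal(\QQd/\QQ)$ lies in a Borel subgroup, and in the basis of the previous subsection the action on $E[p]$ is upper triangular with diagonal characters $\chi_1,\chi_2\colon G\to\Fp^\times$, where $\chi_1$ gives the action on $C$ and $\chi_2$ the action on $E[p]/C$. Both factor through $G_0=\Gal(K/\QQ)$, on which $(\chi_1,\chi_2)$ is injective. The Weil pairing gives $\chi_1\chi_2=\det$, the action on $\mu_p$. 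A rational point of order $p$ on $E$ is exactly a Galois-fixed vector on the stable line $C$, so $\chi_1=1$ forces rational $p$-torsion on $E$. For $E'=E/C$ the Galois-stable line is $\ker\hat\phi=\phi(E[p])$; since $\phi(v_1)=0$ one checks $\phi(\sigma v_2)=\chi_2(\sigma)\phi(v_2)$, so Galois acts on this line through $\chi_2$, whence $\chi_2=1$ forces rational $p$-torsion on $E'$. It therefore suffices to prove that $\chi_1=1$ or $\chi_2=1$.

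Next I would feed in the ramification hypothesis. Because $(\chi_1,\chi_2)$ is injective on $G_0$, for $q\neq p$ the condition $|I_q|=1$ is equivalent to $\chi_1$ and $\chi_2$ both being unramified at $q$, so the hypothesis says precisely that $\chi_1$ and $\chi_2$ are unramified outside $p$. As their order divides $p-1$ and is thus prime to $p$, Kronecker--Weber places the field they cut out inside $\QQ(\mu_{p^\infty})$ and the order condition kills its pro-$p$ part, so both factor through $\Gal(\QQ(\mu_p)/\QQ)\cong\Fp^\times$. On that group $\det$ is the canonical isomorphism, so $\chi_1=\det^a$ and $\chi_2=\det^b$ for integers $a,b$; from $\chi_1\chi_2=\det$ and the surjectivity of $\det$ we get $a+b\equiv1\pmod{p-1}$.

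Finally I would determine $a,b$ modulo $p-1$ from the local behaviour at $p$. By Assumption (II), $E$ has good ordinary reduction at $p$, so the connected--étale sequence of $E[p]$ over $\Zp$ displays a submodule isomorphic to $\mu_p$ with trivial étale quotient; restricted to inertia $I_p$ the semisimplification of $E[p]$ is therefore $\det|_{I_p}\oplus\mathbf 1$. The Borel filtration gives the same semisimplification as $\det^a|_{I_p}\oplus\det^b|_{I_p}$. As noted above, $\det\colon I_p\to\Fp^\times$ is a bijection, so $\det^k|_{I_p}=\mathbf 1$ if and only if $(p-1)\mid k$ and $\det^k|_{I_p}=\det|_{I_p}$ if and only if $k\equiv1\pmod{p-1}$. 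Comparing the two well-defined multisets of inertia characters forces $\{a,b\}\equiv\{0,1\}\pmod{p-1}$ (in particular $a\not\equiv b$, since $\det|_{I_p}\neq\mathbf 1$ for $p\ge5$). Hence one of $\chi_1,\chi_2$ is trivial, which by the first paragraph yields a rational point of order $p$ on $E$ or on $E'$.

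The hard part will be the last step: I must invoke the good-ordinary structure at $p$ correctly and argue only at the level of semisimplifications, so that I do \emph{not} need the global isogeny kernel $C$ to agree with the local connected (multiplicative) part of $E[p]$ at $p$ --- only the unordered pair of inertia characters is used, and that pair is an isogeny invariant. The secondary point to verify is that $\det|_{I_p}$ has exact order $p-1$, which is what makes the matching $\{a,b\}\equiv\{0,1\}$ rigid and rules out the a priori possibility $a\equiv b$; this is exactly the bijectivity of $\det\colon I_p\to\Fp^\times$ already established in the Borel discussion.
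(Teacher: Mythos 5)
Your proof is correct, but it takes a genuinely different route from the paper's. The paper disposes of this proposition in two lines: it quotes Proposition 21 ii) of Serre \cite{serre}, which asserts precisely that one of the sequences $0 \to \ZZ/p\ZZ \to E[p] \to \mu_p \to 0$ or $0 \to \mu_p \to E[p] \to \ZZ/p\ZZ \to 0$ is exact as Galois modules, and then notes that the corresponding dual sequence holds for $E'[p]$. What you have written is, in effect, a self-contained reproof of Serre's proposition: identifying $\chi_1,\chi_2$ as powers $\det^a,\det^b$ of the mod-$p$ cyclotomic character (unramifiedness outside $p$, prime-to-$p$ order, Kronecker--Weber), and then matching the unordered pair of inertia characters at $p$ against $\{1,\det|_{I_p}\}$ to force $\{a,b\}\equiv\{0,1\}\pmod{p-1}$. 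Your passage to $E'$ --- Galois acts on $\ker\hat\phi=\phi(E[p])$ through $\chi_2$ because $\phi$ is defined over $\QQ$ and kills $v_1$ --- is in fact more detailed than the paper's unproved assertion that the dual sequences follow. The trade-off: the paper's proof is short and defers the real work to the literature, while yours exposes the mechanism and, via the semisimplification device, correctly avoids the trap of assuming the global isogeny line coincides with the local multiplicative line at $p$. One point to handle with care: the connected--\'etale sequence over $\Zp$ presupposes that $E/\Qp$ itself has good ordinary reduction, which is how the paper's introduction normalises $E$ but is slightly stronger than Assumption (II) as stated (good ordinary reduction of $E/K$ at places above $p$). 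If you wish to rely only on the section's stated hypotheses, replace that step by a citation of Proposition~\ref{Ipmatrix}: in the Borel case $I_p$ is a split semi-Cartan subgroup of order $p-1$, so the Jordan--H\"older characters of $E[p]$ restricted to $I_p$ are again $\{1,\det|_{I_p}\}$, and the rest of your argument goes through unchanged.
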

\begin{proof}
By \cite[{Proposition 21 ii)}]{serre}, one of
$$0 \to \ZZ/{p\ZZ} \to E[p] \to \mu_p \to 0$$
$$0 \to \mu_p \to E[p] \to \ZZ/{p\ZZ} \to 0$$
is an exact sequence of Galois modules.
These imply that respectively one of
$$0 \to \mu_p \to E'[p] \to \ZZ/{p\ZZ} \to 0$$
$$0 \to \ZZ/{p\ZZ} \to E'[p] \to \mu_p \to 0$$
is an exact sequence as well.
\end{proof}

Now we can rule out $p=11$. The theorem above means that for all $q \neq 11$, $|I_q|$ is $1$ or $3$. The latter is impossible since $I_q$ is a subgroup of $\mathbb{F}_{11}^\times \times \mathbb{F}_{11}^\times$ but $3 \nmid 100$.
Then $I_q=1$ for all $q \neq p$ and the proposition above implies the existence of an elliptic curve with rational $11$-torsion. But there is no such curve by the work of Mazur \cite{mazur}.

We set $p=7$. A result of Greenberg, Rubin, Silverberg and Stoll (Theorem 3.6 in \cite{grss}) gives a parametrisation of all $E$ that have odd
$[\QQd \colon \QQ]/2$, up to isogeny. 
$G_0$ is given by $$\left( \begin{matrix} \chi' & \Fp \\ 0 & \chi'' \end{matrix} \right)$$
for characters $\chi',\chi''$, giving the action on $\operatorname{ker}\varphi$ and $
E[p]/\operatorname{ker}\varphi$ respectively, where $\varphi$ is a $p$-isogeny.

Let $\omega$ denote the character $\Gal(K/\QQ) \to \Fp^\times$ given by action on $\mu_p$.

Then the characters $\chi'$,$\chi''$ restricted to $I_p$ are $\omega^{a'}$ and $\omega^{a''}$ respectively for some $a',a''$.
From the determinant, $\omega^{a'+a''}=\omega$ so one of $a'$ and $a''$ must be even, hence the $p$-inertia part of a character has odd order. Since $|I_q|$ is odd for all
primes $q \neq p$, one of $\chi'$ and $\chi''$ has odd order.
Changing $E$ to its $7$-isogeny pair $E'$ interchanges $\chi'$ and $\chi''$ so up to isogeny, we can assume that the order of $\chi'$ divides $3$. Then we can adapt the theorem almost word by word, setting $k=\QQ$.

\begin{thm}
\label{grss_param}
Let $E/\QQ$ be an elliptic curve and $p$ a prime.
Under Assumptions (I)-(III), $[K \colon \QQ]/2$, equivalently $[\QQd \colon \QQ]/2$, is odd if and only if $E$ has a rational $7$-isogeny and there is a $v \in \QQ$ such that $E$ is {\bf 7-isogenous} over $\QQ$ to the
elliptic curve

$$A_{v,t}  \colon  y^2+ a_1(v,t)xy + a_3(v,t)y = x^3+ a_2(v,t)x^2+ a_4(v,t)x + a_6(v,t)$$

defined as in \cite[Theorem 3.6]{grss}, with an appropriate rational parameter $t$.

\end{thm}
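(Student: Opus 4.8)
Since the statement is an equivalence and essentially all of the reduction has been assembled in the propositions above, my plan is to collect these into a single chain forcing $p=7$ together with a character-order normalisation, invoke \cite[Theorem 3.6]{grss} for the explicit family, and then read off the converse from the same reference. First I would record the translation of the parity condition into group theory. Writing $G=\Gal(\QQd/\QQ)$ and $G_0=\Gal(K/\QQ)$, one has $[\QQd\colon\QQ]=|G|$ and $[K\colon\QQ]=|G_0|=|G|/|G\cap U|$, where $G\cap U$ is the unipotent part, of order $1$ or $p$ and hence odd; this odd factor shows that $[K\colon\QQ]/2$ and $[\QQd\colon\QQ]/2$ have the same parity (the ``equivalently'' in the statement) and that $[K\colon\QQ]/2$ is odd exactly when $4\nmid[K\colon\QQ]$, equivalently $4\nmid|G|$.

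With $4\nmid|G|$ in hand, Proposition \ref{csakaborel} places $G$ in a Borel subgroup, so $E$ admits a rational $p$-isogeny. Mazur's list of isogeny primes \cite{mazur}, together with Assumption (I) and Proposition \ref{nkp1} (which disposes of $p\equiv 1\pmod 4$), leaves only $p=7$ and $p=11$, and $p=11$ is excluded exactly as in the paragraph preceding the statement: Theorem \ref{mikorparos} forces $|I_q|\in\{1,3\}$ for every $q\neq 11$, the value $3$ is impossible because $3\nmid|\Fp^\times\times\Fp^\times|=100$, and then Proposition \ref{mikortorzio} would demand a rational $11$-torsion point, contradicting \cite{mazur}. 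Hence $p=7$. Next I would reproduce the character computation already carried out in the text: restricting the diagonal characters $\chi',\chi''$ (acting on $\ker\varphi$ and $E[p]/\ker\varphi$) to $I_p$ gives $\omega^{a'},\omega^{a''}$ with $a'+a''\equiv 1\pmod 6$, so one of $a',a''$ is even and the corresponding $I_p$-restriction has odd order; combining this with $|I_q|$ odd for all $q\neq p$ (Theorem \ref{mikorparos}) makes one of $\chi',\chi''$ of order dividing $\gcd$-type divisor of $p-1=6$ that is odd, hence dividing $3$. Since passing to the $7$-isogenous curve interchanges $\chi'$ and $\chi''$, up to isogeny I may assume $\ord(\chi')\mid 3$, which is precisely the input of \cite[Theorem 3.6]{grss} with $k=\QQ$; that theorem then supplies rational parameters $v,t$ and the curve $A_{v,t}$ to which $E$ is $7$-isogenous over $\QQ$, giving the forward implication.

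For the converse I would run the same reference backwards: any $A_{v,t}$ in the GRSS family has its mod-$7$ representation in a Borel subgroup with the character on the isogeny kernel of order dividing $3$, so every inertia contribution to $G_0$ is odd and, since $7\equiv 3\pmod 4$, one obtains $4\nmid|G|$, i.e.\ $[K\colon\QQ]/2$ odd; being $7$-isogenous to such a curve preserves this, as the isogeny only interchanges $\chi'$ and $\chi''$ and leaves $|G|$ fixed. The main obstacle I anticipate is not any single computation but the careful matching of hypotheses with \cite{grss}: I must confirm that their genericity conventions and field-of-definition assumptions agree with ours, in particular that the $7$-isogeny and the parametrising curve $A_{v,t}$ are defined over $\QQ$ rather than merely over some extension, and that the normalisation $\ord(\chi')\mid 3$ is exactly the one under which their Theorem 3.6 is stated, so that the ``adapt word by word'' passage is genuinely legitimate.
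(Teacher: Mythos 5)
Your proposal is correct and follows essentially the same route as the paper: reduce via Proposition \ref{csakaborel} to the Borel case, use Mazur's list, Assumption (I) and Proposition \ref{nkp1} to leave $p\in\{7,11\}$, exclude $11$ through the inertia argument and Proposition \ref{mikortorzio}, normalise $\chi'$ to have order dividing $3$ up to $7$-isogeny, and invoke \cite[Theorem 3.6]{grss}; your explicit treatment of the ``equivalently'' clause and of the converse merely spells out what the paper leaves implicit in citing the GRSS parametrisation as an exact characterisation. One cosmetic slip: a $7$-isogeny need not leave $|G|$ fixed (the unipotent part can change by a factor of $7$), but since that factor is odd, the parity of $[\QQd\colon\QQ]/2$ is still preserved and your conclusion is unaffected.
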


\begin{remark}
Here $t$ determines the character $\chi'$.
\end{remark}

\section{A lower bound for $\tau$}

In this section we establish $\tau \ge 2$ under Assumptions (I)-(IV) and the extra condition $j(E) \notin \ZZ$. Recall that $K$ is the minimal field satisfying Assumption (III). See Proposition \ref{taunov} for other fields in the tower.

Note that $j(E) \notin \ZZ$ guarantees $\tau \ge s_{E/K} \ge 1$ as the denominator of $j(E)$ will be divisible by some prime.

Now suppose $\tau=1$, which is odd, therefore $p=7$ and $E$ has a $7$-isogeny by the previous section.

\subsection{$7$-torsion}
Suppose $|I_q|=1$ for all primes $q \neq 7$, then by Proposition \ref{mikortorzio} $E$ or its isogeny pair $E'$ has rational $7$-torsion.

Let $A \in \{E,E'\}$ be the curve with rational $7$-torsion. Suppose for contradiction that $E$ has good reduction at $2$.
Then its rational $7$-torsion points map injectively to its reduction $\tilde{A}$ over $\mathbb{F}_2$ \cite{silverman}.
Hence $\tilde{A}$ is an elliptic curve with at least $7$ points over $\mathbb{F}_2$. 
But by the Hasse bound, an elliptic curve over a finite field $\mathbb{F}_q$ of order $q$ can have at most $(\sqrt{q}+1)^2$ points and
$(\sqrt{2}+1)^2 \approx 5.82842712 < 7$ which is a contradiction.
A variant of the above argument is given in \cite{serre}.

Therefore $A$ must have semistable bad (i.e.\   multiplicative) reduction at $2$. 
Since the conductor of an elliptic curve is isogeny invariant, $E$ also has multiplicative reduction at $2$.

Over $K=\QQ(\mu_7)$, the prime $2$ decomposes into $2$ primes, and by Proposition \ref{semstab} the reductions at these primes are all
split multiplicative, which gives $2 \le s_{E/K} \le \tau$. Note that from parity, we have in fact $3 \le \tau$. This is attained by the example given in \cite{cfks}.

\subsection{Additive reduction at $q$}
If the above does not hold, there is some prime $q \neq p$ with $|I_q| \neq 1$.

Let $\ell \in \QQ$ be a rational prime dividing the denominator of the $j$-invariant of $E$ 
i.e.\ a prime where $E$ has potentially multiplicative reduction.
By Theorem \ref{mikorparos} this is semistable multiplicative reduction and $|I_\ell|=1$.

We show that $\ell$ must decompose in $K$.
Suppose for contradiction that $\ell$ does not decompose i.e.\   its decomposition subgroup is all of $G_0$. $G_0$ is then the quotient of the decomposition subgroup by $I_\ell$, and as such it should be cyclic. Recall that $|I_q|$ must be a nontrivial factor of $|\Fp^\times|$.
$G_0$ contains $I_q \times I_p$ which cannot be cyclic since $\mathrm{gcd}(|I_q|,|I_p|)=|I_q|>1$.

Therefore there will be at least $3$ primes in $K$ lying over $\ell$. 
These will all have split multiplicative reduction by Proposition \ref{semstab}, hence $3 \le s_{E/K}$ and our claim follows.

\section{Integral $j$-invariant}

Our main tool is the following well known theorem:

\begin{thm}
There is a $p$-isogeny between two elliptic curves $E$ and $E'$ if and only if $(j(E),j(E'))$ is a 
point on the curve $X_0(p)$.
\end{thm}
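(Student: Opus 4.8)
The plan is to exploit the moduli interpretation of $X_0(p)$, whose non-cuspidal points parametrise isomorphism classes of pairs $(E, C)$ with $C \subset E$ a cyclic subgroup of order $p$; such a pair is equivalent to the datum of a cyclic $p$-isogeny $\varphi \colon E \to E/C$ together with its source. I would start from the two forgetful morphisms to the $j$-line $X(1) = \mathbb{P}^1$, namely $(E,C) \mapsto j(E)$ and $(E,C) \mapsto j(E/C)$, which assemble into a map $\Psi \colon X_0(p) \to \mathbb{P}^1 \times \mathbb{P}^1$. On the affine part this $\Psi$ is cut out by the modular polynomial $\Phi_p(X,Y)$, characterised by the property that $\Phi_p(j(E), j(E')) = 0$ holds exactly when $E$ and $E'$ are joined by a cyclic $p$-isogeny. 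Once ``$(j,j')$ lies on $X_0(p)$'' is read as ``$\Phi_p(j,j') = 0$'', it then suffices to match up $p$-isogenies with points of $\Psi$.

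For the forward direction I would argue as follows. Given an isogeny $\varphi \colon E \to E'$ of degree $p$, since $p$ is prime its kernel has order $p$ and is automatically cyclic, so $(E, \ker\varphi)$ is a point of the open modular curve $Y_0(p)$. As $E' \cong E/\ker\varphi$, its image under $\Psi$ is exactly $(j(E), j(E'))$, placing this pair on $X_0(p)$.

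For the reverse direction I would take a point of $Y_0(p)$ lying over $(j,j')$; by the moduli interpretation it is represented by some $(E_0, C)$ with $j(E_0) = j$, $j(E_0/C) = j'$, and $E_0 \to E_0/C$ a cyclic $p$-isogeny. For arbitrary $E, E'$ with $j(E) = j$ and $j(E') = j'$, I would use that over $\overline{\QQ}$ (equivalently over $\CC$) the $j$-invariant pins down the curve up to isomorphism, so that $E \cong E_0$ and $E' \cong E_0/C$; composing these isomorphisms with the isogeny produces a $p$-isogeny $E \to E'$.

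The hard part will be the moduli/plane-curve dictionary underlying the first paragraph: verifying that the smooth curve $X_0(p)$ maps onto the possibly singular plane curve $\{\Phi_p = 0\}$ as its normalisation and that the two $j$-projections really recover the two variables of $\Phi_p$. The concrete tool I would reach for is the complex-analytic picture: writing $E = \CC/(\ZZ\tau + \ZZ)$ with $\tau$ in the upper half-plane $\mathbb{H}$, the index-$p$ sublattices correspond to the $p+1$ lines in $(\ZZ/p\ZZ)^2$, and the quotients have $j$-invariants $j(p\tau)$ and $j((\tau+k)/p)$ for $k = 0, \dots, p-1$, which are precisely the $p+1$ roots of $\Phi_p(X, j(\tau))$. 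The symmetry of $\Phi_p$ --- equivalently the existence of the dual isogeny $\hat\varphi \colon E' \to E$, again of degree $p$ --- is what makes the relation symmetric in the two $j$-invariants, matching the interchangeability of the two projections, while the cusps of $X_0(p)$ absorb the degenerate limits so that the statement is a genuine ``if and only if''.
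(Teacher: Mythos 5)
The paper offers no proof of this statement to compare yours against: it is invoked as ``well known'' (the classical modular-polynomial description of $X_0(p)$) and used as a black box. Judged on its own, your outline is the standard argument and is essentially correct. The forward direction is immediate, as you say, because the kernel of a degree-$p$ isogeny has prime order and is therefore cyclic, giving a point of $Y_0(p)$; and the ``hard part'' you identify is exactly the classical construction of $\Phi_p$: for $E=\CC/(\ZZ\tau+\ZZ)$ the $p+1$ index-$p$ sublattices yield the quotient invariants $j(p\tau)$ and $j((\tau+k)/p)$, $k=0,\dots,p-1$, which are precisely the roots of $\Phi_p(X,j(\tau))$, and symmetry of $\Phi_p$ comes from the dual isogeny. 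Filling in that $\Phi_p$ has rational (indeed integral) coefficients is textbook material and poses no obstacle.

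One caveat deserves emphasis, because it matters for how the theorem is used in this paper. Your reverse direction produces an isogeny only over $\overline{\QQ}$ (or $\CC$): the isomorphisms $E \cong E_0$ and $E' \cong E_0/C$ furnished by equality of $j$-invariants exist only over an algebraically closed field. If one reads ``$p$-isogeny'' as ``$p$-isogeny defined over $\QQ$'', the ``if'' direction is false: replacing $E$ by a quadratic twist $E_d$ leaves $\bigl(j(E),j(E')\bigr)$ unchanged but in general destroys the rational isogeny, which survives only over $\QQ(\sqrt{d})$. So the theorem is correct only as a statement about geometric isogenies, which is the reading your proof gives and the one the paper actually needs: this rationality defect is precisely why the paper follows the theorem with the ``Twisting'' subsection and Proposition \ref{csakveges}, sorting out which twists of a curve with a given $j$-invariant can satisfy the hypotheses. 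You would do well to state explicitly at the outset that all isogenies and isomorphisms in the proof are over $\overline{\QQ}$.
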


Using Theorem \ref{grss_param}, we can restrict to $p=7$.
Therefore we are looking for integral points on $X_0(7)$.

\subsection{Integral points on $X_0(7)$}

$X_0(7)$ has genus $0$, therefore it has a rational parametrisation (see, e.g.~\cite{hoeij})
$$\big( (t^2+13t+49)(t^2+245t+2401)^3/t^7, (t^2+13t+49)(t^2+5t+1)^3/t \big) \qquad t \in \QQ$$
We need both coordinates to be integral. Let $t=a/b$ in reduced form.
The first coordinate is then 
$$\frac{(a^2+13ab+49b^2)(a^2+245ab+2401b^2)^3}{a^7 b}$$
Modulo $a$, the numerator is $7^{14} b^8$. Using $(a,b)=1$, this is divisible by $a$ if and only if $a \mid 7^{14}$.
Modulo $b$, the numerator is $a^8$. This is divisible by $b$ if and only if $b \mid 1$.

The second coordinate is
$$\frac{(a^2+13ab+49b^2)(a^2+tab+b^2)^3}{a b^7}$$
Modulo $a$, the numerator is $7^2 b^8$. Using $(a,b)=1$, this is divisible by $a$ if and only if $a \mid 7^2$.
Modulo $b$, the numerator is $a^8$. This is divisible by $b$ if and only if $b \mid 1$.

Therefore the possibilities are $t \in \{1,7,49,-1,-7,-49\}$.
Note that if $t$ parametrizes the pair $(j_1,j_2)$ then $49/t$ gives $(j_2,j_1)$.

$t=1$ and $t=49$ give $j \in \{3^2 \cdot 7 \cdot 2647^3, 3^2 \cdot 7^4 \}$.
$t=-1$ and $t=-49$ give $j \in \{- 3^3 \cdot 37 \cdot 719^3, 3^3 \cdot 37\}$
The rest are symmetric i.e. CM points:
$t=7$ gives $j=3^3 \cdot 5^3 \cdot 17^3$ and $t=-7$ gives $j=- 3^3 \cdot 5^3$.

Therefore the possible $j$-invariants are 
$j \in \{3^2 \cdot 7 \cdot 2647^3, 3^2 \cdot 7^4, -3^3 \cdot 37 \cdot 719^3, 3^3 \cdot 37\}$.

\subsection{Twisting}

Let $E_d$ denote the twist of an elliptic curve $E$ by the character $\left( \frac{d}{\cdot} \right)$ for a square-free integer $d$. 
Explicitly, for an equation
$$E \colon y^2=x^3+a_2 x^2+a_4 x + a_6$$ 
$$E_d \colon d y^2 = x^3+a_2 x^2+a_4 x + a_6$$ 

$E$ and $E_d$ are not isomorphic over $\QQ$ if $d \neq 1$, but they are isomorphic over $\QQ(\sqrt{d})$.

It is well known (see, e.g. \cite{silverman}) that
\begin{thm}
If $E/\QQ$ is an elliptic curve with $j(E) \neq 0,~1728$ then the elliptic curves with $j$-invariant $j(E)$ are 
exactly the curves $E_d$.
\end{thm}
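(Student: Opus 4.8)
The plan is to combine the standard cohomological classification of twists with a computation of the geometric automorphism group of $E$. First I would recall that two elliptic curves over $\QQ$ have the same $j$-invariant if and only if they become isomorphic over $\bar{\QQ}$, since $j$ is a complete isomorphism invariant over any algebraically closed field. Hence the curves with $j$-invariant equal to $j(E)$ are exactly the twists of $E$, and the set of such twists modulo $\QQ$-isomorphism is classified by the Galois cohomology group $H^1(\Gal(\bar{\QQ}/\QQ), \Aut_{\bar{\QQ}}(E))$, where $\Aut_{\bar{\QQ}}(E)$ is given the natural Galois action.

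The decisive step is to pin down this coefficient group. Because $j(E) \neq 0, 1728$, the geometric automorphism group is $\Aut_{\bar{\QQ}}(E) = \{\pm 1\} \cong \mu_2$; the larger automorphism groups $\mu_4$ and $\mu_6$ occur precisely at $j = 1728$ and $j = 0$, which the hypothesis excludes. Since $-1 \in \QQ$, the Galois action on $\mu_2$ is trivial, so $H^1(\Gal(\bar{\QQ}/\QQ), \mu_2) = \Hom(\Gal(\bar{\QQ}/\QQ), \mu_2)$. By Kummer theory this group is canonically isomorphic to $\QQ^\times/(\QQ^\times)^2$, whose classes are represented by the square-free integers $d$, with $d = 1$ giving the trivial class (the curve $E$ itself).

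Finally I would verify that the class attached to a square-free $d$ is exactly the one realised by the quadratic twist $E_d$ of the explicit equation. Writing $E \colon y^2 = f(x)$ and $E_d \colon d v^2 = f(u)$, the $\bar{\QQ}$-isomorphism $\phi \colon E \to E_d$ sending $(x,y) \mapsto (x, y/\sqrt{d})$ is defined over $\QQ(\sqrt{d})$, and the associated cocycle $\sigma \mapsto \phi^{\sigma} \circ \phi^{-1}$ equals $\sigma \mapsto \sigma(\sqrt{d})/\sqrt{d} \in \{\pm 1\}$. This is precisely the quadratic character of $\QQ(\sqrt{d})/\QQ$, which under the Kummer isomorphism corresponds to the class of $d$ in $\QQ^\times/(\QQ^\times)^2$. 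Thus the $E_d$ run over all cohomology classes and therefore over all twists, with distinct square-free $d$ giving non-isomorphic curves over $\QQ$.

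The main obstacle is the middle step: correctly identifying $\Aut_{\bar{\QQ}}(E) = \mu_2$ and recognising that the hypothesis $j \neq 0, 1728$ is exactly what removes the exceptional automorphisms that would otherwise enlarge the classifying group. Once the coefficient group is fixed as $\mu_2$ with trivial action, the remainder is the routine Kummer-theoretic dictionary $H^1(\Gal(\bar{\QQ}/\QQ), \mu_2) \cong \QQ^\times/(\QQ^\times)^2$, and the matching with $E_d$ is the standard cocycle computation sketched above.
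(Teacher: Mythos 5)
Your proof is correct and complete: the identification $\Aut_{\bar{\QQ}}(E)=\{\pm 1\}$ for $j\neq 0,1728$, the Kummer-theoretic isomorphism $H^1(\Gal(\bar{\QQ}/\QQ),\mu_2)\cong \QQ^\times/(\QQ^\times)^2$, and the cocycle computation matching the class of $d$ with the twist $E_d$ are all accurate. The paper offers no proof of this theorem---it cites it as well known from Silverman---and your argument is precisely the standard cohomological twisting argument of that reference, so you have simply supplied, correctly, the proof the paper omits.
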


\begin{lemma}
Let $E/\QQ$ be an elliptic curve with a $p$-isogeny, having good reduction at a prime $q \neq p$.
Let $\left( \frac{d}{\cdot}\right)$ be a quadratic character with conductor divisible by $q$.
Then the order of the inertia subgroup of $q$ in $\Gal(\QQ(E_d[p])/\QQ)$ is $2$.

\end{lemma}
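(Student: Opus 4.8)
The plan is to pass to mod-$p$ Galois representations and exploit that a quadratic twist multiplies the representation by a quadratic character; the order of inertia at $q$ can then be read off directly. Write $\rho \colon \Gal(\bar{\QQ}/\QQ) \to \GL_2(\Fp)$ for the representation on $E[p]$, and let $\chi$ be the quadratic character $\left(\frac{d}{\cdot}\right)$, regarded as valued in $\{\pm 1\} \subseteq \Fp^\times$; this makes sense and $-1 \neq 1$ in $\Fp$ because $p \ge 5$. Fix a prime $\mathfrak{q}$ of $\bar{\QQ}$ above $q$ and let $I_q$ denote its local inertia group.

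First I would note that, since $E$ has good reduction at $q \neq p$, the criterion of N\'eron--Ogg--Shafarevich shows $\rho$ is unramified at $q$, so $\rho(I_q)$ is trivial; in particular $I_q$ acts trivially on $E[p]$. Next, the standard description of quadratic twists gives that the representation on $E_d[p]$ is $\rho \otimes \chi$. The hypothesis that the conductor of $\chi$ is divisible by $q$ says exactly that $\chi$ is ramified at $q$, and since $\chi$ is quadratic, $\chi(I_q) = \{\pm 1\}$ has order $2$.

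Combining these, restriction to inertia yields $(\rho \otimes \chi)|_{I_q} = (\rho|_{I_q}) \otimes (\chi|_{I_q})$, which is just the scalar action of $\chi|_{I_q}$ because $\rho|_{I_q}$ is trivial. Hence $I_q$ acts on $E_d[p]$ through the scalar matrices $\{\pm \mathrm{Id}\}$. The inertia subgroup of $q$ in $\Gal(\QQ(E_d[p])/\QQ)$ is precisely $(\rho \otimes \chi)(I_q) = \{\pm \mathrm{Id}\}$, a group of order $2$, which is the assertion.

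The argument is essentially formal once the twist formula $\rho_{E_d} = \rho_E \otimes \chi$ is in hand. The only points needing care are that $-\mathrm{Id}$ is genuinely nontrivial --- guaranteed by $p$ being odd --- and that the hypothesis on the conductor correctly encodes ramification of $\chi$ at $q$, which is immediate from the definition of the conductor of a Dirichlet character.
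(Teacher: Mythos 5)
Your proof is correct, but it follows a genuinely different route from the paper's. The paper argues geometrically: since $E_d$ and $E$ become isomorphic over $\QQ(\sqrt{d})$ and $E$ has good reduction at $q$, the curve $E_d$ is potentially good at $q$ and acquires good reduction over a ramified quadratic extension; the order of inertia is then read off from Proposition \ref{minext} (Serre's identification of $|I_q|$ with the degree of the minimal extension of $\QQ_q^{\mathrm{unr}}$ over which good reduction is attained). You instead work directly on the mod-$p$ representation: N\'eron--Ogg--Shafarevich makes $\bar{\rho}_E$ unramified at $q$, the twist formula $\bar{\rho}_{E_d} \cong \bar{\rho}_E \otimes \chi$ reduces everything to the character, and ramification of $\chi$ at $q$ gives $\bar{\rho}_{E_d}(I_q) = \{\pm \mathrm{Id}\}$. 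Your version buys three things: it is more self-contained (no appeal to Serre's minimal-extension result); it identifies the inertia subgroup exactly as the central subgroup $\{\pm \mathrm{Id}\}$, which is strictly more information than its order; and it makes explicit a step the paper leaves implicit, namely that $E_d$ does \emph{not} already have good reduction over $\QQ_q^{\mathrm{unr}}$ (in the paper's phrasing, that the minimal extension is genuinely quadratic rather than trivial) --- in your setup this is exactly the statement that $\chi|_{I_q}$ is nontrivial. What the paper's route buys is consistency with its own toolkit: Proposition \ref{minext} and the reduction-type dictionary are used repeatedly elsewhere (e.g.\ in Theorem \ref{mikorparos}), so the geometric phrasing slots directly into the surrounding arguments. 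One further observation common to both proofs: the hypothesis that $E$ has a $p$-isogeny is never actually used for the inertia computation; it is carried along only because of the lemma's role in Proposition \ref{csakveges}.
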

\begin{proof}
Since the existence of a $p$-isogeny only depends on the $j$-invariant, $E_d$ also has a $p$-isogeny.
$E/\QQ$ and $E_d/\QQ$ are isomorphic over $\QQ(\sqrt{d})$.
$E/\QQ(\sqrt{d})$ has good reduction at $q$,
therefore so does $E_d/\QQ(\sqrt{d})$.

Hence the minimal extension where $E_d$ obtains good reduction at $q$ is a quadratic extension
with ramification degree $2$ at $q$. The claim follows from Proposition \ref{minext}.

\end{proof}

\begin{prop}
\label{csakveges}
For any given $j$-invariant $j_0$, there are only finitely many curves $E/\QQ$ having $j(E)=j_0$ and also satisfying Assumptions (I)-(IV) and 
$4 \nmid [\QQ(E[p]) \colon \QQ]$. These curves have the same conductor apart from a possible factor of $7^2$.
\end{prop}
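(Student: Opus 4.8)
The plan is to reduce everything to quadratic twists. Since the $j$-invariants in question satisfy $j_0 \neq 0, 1728$ (as holds for the non-CM values found above), the theorem on twists says that every curve with $j(E)=j_0$ is of the form $E_d$ for a single fixed reference curve $E$ and a square-free $d$. If no curve with $j=j_0$ satisfies the hypotheses the claim is vacuous, so I fix one valid curve and take it as the reference $E$. The condition $4 \nmid [\QQd \colon \QQ]$ forces $p=7$, and by the analysis behind Proposition \ref{paratlaninercia} it is equivalent to the inertia group $I_q$ of every rational prime $q \neq 7$ in $\Gal(\QQ(E[7])/\QQ)$ having odd order; in particular $|I_q(E)| = 1$ at primes of good reduction and, by Serre's formula and Theorem \ref{mikorparos}, $|I_q(E)| = 3$ (i.e.\ $\gcd(12, v_q(\Delta)) = 4$) at the additive, potentially good primes $q \geq 5$, $q \neq 7$.

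The core of the argument is to show that validity of a second twist $E_d$ forces the twisting character $\left(\frac{d}{\cdot}\right)$ to be unramified at every prime $q \geq 5$ with $q \neq 7$. At a prime $q \geq 5$, $q \neq 7$ of good reduction, if $q$ divides the conductor of $\left(\frac{d}{\cdot}\right)$ then the preceding lemma gives $|I_q(E_d)| = 2$, which is even and contradicts validity; hence $q \nmid d$. At an additive prime $q \geq 5$, $q \neq 7$, twisting by a $d$ divisible by $q$ replaces the minimal discriminant valuation by $v_q(\Delta) + 6 \pmod{12}$, moving $\gcd(12, v_q(\Delta))$ from $4$ to $2$ and hence $|I_q|$ from $3$ to $6$; this is again even, so once more $q \nmid d$. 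Consequently $d$ is supported only on $\{2, 3, 7\}$, so it ranges over a finite set and only finitely many twists $E_d$ can be valid. Moreover, at every $q \geq 5$ with $q \neq 7$ the valid twists share the reduction type of $E$ and therefore the same conductor exponent, while at $q = 7 \geq 5$ the potentially good reduction is tame, so the conductor exponent there is $0$ or $2$; this already gives equality of conductors away from $7$, up to the factor $7^2$.

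The main obstacle is the behaviour at $q = 2$ and $q = 3$, where the tame formula $|I_q| = 12/\gcd(12, v_q(\Delta))$ fails because of wild ramification and where a quadratic twist may be ramified even when $E$ has good reduction. For finiteness this is harmless: there are only finitely many ramified quadratic extensions of $\QQ_2$ and $\QQ_3$, so the local components of $\left(\frac{d}{\cdot}\right)$ at $2$ and $3$ contribute only finitely many further choices. For the sharper conductor assertion one argues that when $E$ is semistable at $2$ and $3$ — or, more generally, whenever $|I_2|$ and $|I_3|$ are odd as in the hypotheses of Theorem \ref{mikorparos} — the preceding lemma again forbids ramified twisting there, since a ramified twist of a good-reduction prime would force $|I_q| = 2$. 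Thus any two valid curves differ by a character ramified only at $7$, and their conductors agree apart from a factor of $7^2$, as claimed. I expect the delicate point in a complete write-up to be precisely this treatment of $2$ and $3$.
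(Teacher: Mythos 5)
Your finiteness argument is complete and correct, and it takes a genuinely different route from the paper: the paper fixes a valid curve $E$ of \emph{minimal conductor} and observes that any valid twist $E_d$ must have square-free $d$ dividing $7\Delta$ (a prime $q \mid d$ with $q \nmid 7\Delta$ would be a good-reduction prime, where the twisting lemma gives $2 \mid |I_q(E_d)|$), which is already a finite set; the conductor statement is then extracted from minimality of $N_E$ rather than from unramifiedness of the twisting character. Your finer analysis at the additive primes $q \ge 5$ (the shift $v_q(\Delta) \mapsto v_q(\Delta)+6 \bmod 12$, turning $|I_q|=3$ into $6$) is sound and buys the sharper conclusion that $d$ is supported on $\{2,3,7\}$.

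The genuine gap is exactly where you suspected: the conductor claim at $q=2$. Your phrase ``the preceding lemma again forbids ramified twisting there'' is only justified when $E$ has \emph{good} reduction at $2$ and $3$, because good reduction at $q$ is a hypothesis of that lemma; it says nothing when $E$ has additive, potentially good reduction with $|I_q|=3$, a case that validity allows and that genuinely occurs (the curves $3969a1$, etc.\ in the paper's table have additive reduction at $3$). At $q=3$ the hole can be patched with tools you already used: any quadratic character ramified at $3$ has $3 \mid d$, so $v_3(\Delta)$ shifts by $6 \bmod 12$ and condition 2) of Theorem \ref{mikorparos} (which is necessary for \emph{every} $q \neq p$, including $2$ and $3$) fails for $E_d$. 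At $q=2$, however, a character with odd $d \equiv 3 \pmod 4$ is ramified at $2$ while shifting $v_2(\Delta)$ by $0 \bmod 12$, so neither the shift argument nor the lemma excludes it, and such a twist can in principle change the wild conductor exponent at $2$. What is needed is a strengthened twisting lemma: if $\chi$ is a quadratic character ramified at $q \neq p$ and the inertia image $\bar\rho_E(I_q)$ in $\GL_2(\Fp)$ has odd order, then the inertia image of $\bar\rho_E \otimes \chi$ has even order. This is elementary: $g \mapsto (\bar\rho_E(g), \chi(g))$ maps $I_q$ onto a subgroup of $\bar\rho_E(I_q) \times \{\pm 1\}$ surjecting onto both factors, hence (the orders being coprime) onto the full product, and the map $(A,\varepsilon) \mapsto \varepsilon A$ is injective on this product because $-\mathrm{Id}$ cannot lie in the odd-order group $\bar\rho_E(I_q)$; thus the twisted inertia image has order $2\,|\bar\rho_E(I_q)|$. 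With this statement in place of your appeal to the good-reduction lemma, your argument closes completely, and in fact yields the stronger conclusion $d \in \{1,-7\}$, i.e.\ at most two valid curves per $j$-invariant.
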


\begin{proof}
Let $E/\QQ$ be a curve with minimal conductor $N_E$ among elliptic curves with $j$-invariant $j_0$ and satisfying the conditions.
Then by our previous results, $E$ has a rational $7$-isogeny.

Let $\Delta$ be the minimal discriminant of $E/\QQ$.
If $d$ is a square-free integer not dividing $7\Delta$, then there is prime $q \neq 7$ dividing $d$ where
$E$ has good reduction. Then by the above lemma, $2 \mid |I_q|$ so by Proposition \ref{paratlaninercia}, $4 \mid [\QQ(E_d[7]) \colon \QQ]$.

Therefore all exceptional curves with $j$-invariant $j_0$ are twists of $E$ by some square-free divisor of $7\Delta$, 
of which there are finitely many.

Similarly, twists that change the conductor result in a larger conductor because we chose $N_E$ to be minimal.
The twisted conductor is either $7^2 N_E$ (since additive bad reduction appeared at $p$) or has a prime divisor $q \neq 7$ where $E$ has good reduction.
This implies a good reduction becomes potentially good additive after the twist, and we can invoke the lemma. Note that since $p \ge 5$ the exponent
of $p$ in the conductor of any elliptic curve with integral $j$-invariant is $0$ or $2$.

\end{proof}

\subsection{Calculations}

Together with the list of possible $j$-invariants, Proposition \ref{csakveges} provides a list of all curves that could have $\tau=1$. Using the SAGE \cite{sage} function {\tt EllipticCurve\_from\_j}, we obtain a minimal conductor elliptic curve for each $j$-invariant involved. We take all curves with these conductors and also their $-7$-twists. Using Cremona's tables \cite{cremona}, these are

	\begin{tabular}{|r|r|r|}
		\hline
		Label & $j$-invariant & Discriminant \\ \hline
		$1369b1$	& $3^3 \cdot 37$ & $-37^8$ \\
		$1369b2$	& $-3^3 \cdot 37 \cdot 719^3$ & $-37^8$ \\
		$1369c1$ & $3^3 \cdot 37$ & $-37^2$ \\
		$1369c2$ & $-3^3 \cdot 37 \cdot 719^3$ & $-37^2$ \\

		$67081b1$ & $3^3 \cdot 37$ & $-7^6 \cdot 37^8$ \\
		$67081b2$ & $-3^3 \cdot 37 \cdot 719^3$ & $-7^6 \cdot 37^8$ \\
		$67081d1$ & $3^3 \cdot 37$ & $-7^6 \cdot 37^2$ \\
		$67081d2$ & $-3^3 \cdot 37 \cdot 719^3$ & $-7^6 \cdot 37^2$ \\

		$3969a1$ & $3^2 \cdot 7^4$ & $3^4 \cdot 7^8$ \\
		$3969a2$ & $3^2 \cdot 7 \cdot 2647^3$ & $3^4 \cdot 7^8$ \\
		$3969c1$ & $3^2 \cdot 7^4$ & $3^4 \cdot 7^2$ \\
		$3969c2$ & $3^2 \cdot 7 \cdot 2647^3$ & $3^4 \cdot 7^2$ \\
		$3969e1$ & $3^2 \cdot 7^4$ & $3^{10} \cdot 7^2$ \\
		$3969e2$ & $3^2 \cdot 7 \cdot 2647^3$ & $3^{10} \cdot 7^2$ \\
		$3969f1$ & $3^2 \cdot 7^4$ & $3^{10} \cdot 7^2$ \\
		$3969f2$ & $3^2 \cdot 7 \cdot 2647^3$ & $3^{10} \cdot 7^2$ \\
		\hline
	\end{tabular}

Next, we use Theorem \ref{mikorparos} to rule out rows with $37^2$ and $3^{10}$ in the discriminant.

\begin{thm}
\label{utolsotetel}

Assume (I)-(IV) for an elliptic curve $E/K$ having rational coefficients. Then
$$\tau:=\rk_{\Lambda(H)} X(E/\Kcc) \ge 2$$
holds with finitely many exceptions up to $\QQ$-isomorphism of elliptic curves.
The possibly exceptional isomorphism classes are classified by the following table. \\

	\begin{tabular}{|r|r|r|r|r|}
		\hline
	$p$ & $E$ (label) & $j$-invariant & Discriminant & rank over $\QQ$ \\ \hline
	$7$	&$1369b1$	& $3^3 \cdot 37$ & $-37^8$ & $1$ \\
	$7$	&$1369b2$	& $-3^3 \cdot 37 \cdot 719^3$ & $-37^8$ & $1$ \\
	$7$	&$67081b1$ & $3^3 \cdot 37$ & $-7^6 \cdot 37^8$ & $0$ \\
	$7$	&$67081b2$ & $-3^3 \cdot 37 \cdot 719^3$ & $-7^6 \cdot 37^8$ & $0$ \\ \hline

	$7$	&$3969a1$ & $3^2 \cdot 7^4$ & $3^4 \cdot 7^8$ & $1$ \\
	$7$	&$3969a2$ & $3^2 \cdot 7 \cdot 2647^3$ & $3^4 \cdot 7^8$ & $1$ \\
	$7$	&$3969c1$ & $3^2 \cdot 7^4$ & $3^4 \cdot 7^2$ & $0$ \\
	$7$	&$3969c2$ & $3^2 \cdot 7 \cdot 2647^3$ & $3^4 \cdot 7^2$ & $0$ \\
		\hline
	\end{tabular}
\end{thm}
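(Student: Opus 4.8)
The plan is to prove the contrapositive: assuming (I)--(IV) together with $\tau \le 1$, I will show that $E$ lies in one of the eight $\QQ$-isomorphism classes tabulated. Since $\tau > 0$ is already established, the hypothesis reduces to $\tau = 1$, and the whole argument consists in feeding this single odd value through the structural results proved above. First I would apply the parity theorem (Theorem \ref{taupar}): because $\tau = 1$ is odd, $[K \colon \QQ]/2$ is odd, and the classification leading to Theorem \ref{grss_param} then forces $p = 7$ together with a rational $7$-isogeny $E \to E'$.

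Next I would eliminate the non-integral case. By the lower bound established for curves with $j(E) \notin \ZZ$, such a curve already has $\tau \ge 2$; hence $j(E) \in \ZZ$. Since the isogeny has degree $7$, the pair $(j(E), j(E'))$ is an integral point of $X_0(7)$, so the elementary analysis of its genus-zero parametrisation restricts the parameter to $t \in \{\pm 1, \pm 7, \pm 49\}$. Discarding the CM values $t = \pm 7$ leaves precisely the four $j$-invariants $3^2 \cdot 7^4$, $3^2 \cdot 7 \cdot 2647^3$, $3^3 \cdot 37$ and $-3^3 \cdot 37 \cdot 719^3$.

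With the $j$-invariant confined to these four values, Proposition \ref{csakveges} guarantees that only finitely many admissible curves remain: each is a quadratic twist of a fixed minimal-conductor representative by a square-free divisor of $7\Delta$, since a twist ramified at a prime $q$ of good reduction would force $|I_q| = 2$ and hence $4 \mid [\QQ(E[7]) \colon \QQ]$ by Proposition \ref{paratlaninercia}, contradicting oddness. Concretely I would choose a minimal representative for each $j$-invariant, enumerate all curves of the resulting conductors together with their $-7$-twists in Cremona's tables, and so obtain the sixteen-row candidate list. Finally I would impose the necessary condition of Theorem \ref{mikorparos}: because $j \in \ZZ$ makes every bad prime $q \neq 7$ a prime of additive (potentially good) reduction, oddness of $[K \colon \QQ]/2$ demands $4 \mid v_q(\Delta)$ there, and the rows with $v_{37}(\Delta) = 2$ or $v_3(\Delta) = 10$ violate this and are discarded, leaving the eight classes of the statement.

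The conceptual work has all been carried out earlier (the parity computation, the $\mathrm{GL}_2$--$\mathrm{PGL}_2$ classification and the bound for non-integral $j$), so the residual obstacle is the finite bookkeeping of this last step. Two points deserve care. One must confirm that enumerating the stated conductors and their $-7$-twists in Cremona's tables really exhausts the twists permitted by Proposition \ref{csakveges}, recalling that for integral $j$ a twist can alter the conductor only by the factor $7^2$. And one must apply the test $4 \mid v_q(\Delta)$ to the \emph{minimal} discriminant: the prime $q = 37$, being coprime to $6$, is governed directly by Serre's formula $|I_q| = 12/\gcd(12, v_q(\Delta))$, whereas the excluded prime $q = 3$ lies outside that formula, so the parity of $|I_3|$ must instead be read off the explicit curve, as anticipated by the hypotheses of Theorem \ref{mikorparos}.
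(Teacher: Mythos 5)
Your proposal is correct and follows essentially the same route as the paper: reduce to $\tau=1$, use parity to force $p=7$ and a rational $7$-isogeny, invoke the $j(E)\notin\ZZ$ lower bound to conclude $j(E)\in\ZZ$, solve for integral points on $X_0(7)$, control the twists via Proposition \ref{csakveges}, and prune the sixteen Cremona candidates with Theorem \ref{mikorparos}. Your only inaccuracy is the closing caveat about $q=3$: the necessity direction of Theorem \ref{mikorparos} already applies there, since Serre's congruence $|I_q|\,v_q(\Delta)\equiv 0 \pmod{12}$ holds for every $q\neq p$ and oddness of $|I_q|$ then forces $4\mid v_q(\Delta)$, so the $3^{10}$ rows are discarded without any curve-specific computation of $|I_3|$ --- the hypothesis about semistability at $2$ and $3$ concerns only the sufficiency direction of that theorem.
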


\vspace{1em}

Note that the first and second four curves in this table form two equivalence classes: these are isomorphic or $7$-isogenous over $\QQ(\sqrt{-7}) \le \QQ(\mu_7) \le K$ (for any possible $K$) and since $\lambda$ and $s_{E/\Kc}$ are isogeny invariants, these have the same $\tau$ given assumptions (I)-(IV).

\begin{remark}
From these data it follows that all these curves have rank $1$ over $\QQ(\sqrt{-7})$ which is a necessary condition for $\tau=1$. Further Iwasawa theoretic calculations would be needed to compute their $\lambda$ rank (which equals their $\tau$ rank).
\end{remark}


\ifapx

\appendix
\begin{center}
\section*{\large Appendix: A note on central torsion Iwasawa modules}
\author[Gergely Zábrádi]{{\sc Gergely Z\'abr\'adi}}
\end{center}

\addtocounter{section}{1}
\setcounter{thm}{0}
\setcounter{subsection}{0}
\subsection{Notation and preliminaries}\label{not}

\fi

\vspace{4em}

\end{document}